\newtheorem{theorem}{Theorem}[section]
\newtheorem{lemma}[theorem]{Lemma}
\newtheorem{proposition}[theorem]{Proposition}
\theoremstyle{definition}
\newtheorem{remark}[theorem]{Remark}
\theoremstyle{definition}
\newcommand{\dd}{{\mathrm{d}}}
\newcommand{\ssl}[2]{\mathrm{SL}_{#1}(\mathbb{#2})}
\newcommand{\ggl}[2]{\mathrm{GL}_{#1}(\mathbb{#2})}
\newcommand{\oo}[2]{\mathrm{O}_{#1}(\mathbb{#2})}
\newcommand{\GL}[1]{\mathrm{GL}_{#1}}
\newcommand{\pra}[1]{\left(#1\right)}
\newcommand{\sign}{\operatorname{sign}}
\newcommand{\blue}{\textcolor{blue}}
\renewcommand{\mod}{\textrm{mod }}
\newcommand{\WJ}{\mathcal{W}_{\textrm{Jacquet}}}
\newcommand{\spaces}{ { } \quad\; { } \quad \;  { } \quad\; { } \quad \;  { } \quad    }
\newcommand{\nothing}[1]{{}}
\newcommand{\gammafactor}[1]{\Gamma\pra{\tfrac{#1-\alpha}{2}} \Gamma\pra{\tfrac{#1-\beta}{2}}  \Gamma\pra{\tfrac{#1-\gamma}{2}}}
\newcommand{\gammafactorminus}[1]{\Gamma\pra{\tfrac{#1+\alpha}{2}} \Gamma\pra{\tfrac{#1+\beta}{2}}  \Gamma\pra{\tfrac{#1+\gamma}{2}}}
\newcommand{\levelN}{^{\scriptscriptstyle {(N )}}}
\newcommand{\chibs}{\overline{\chi^*}}
\newcommand{\chis}{\chi^*}
\newcommand{\chib}{{\bar{\chi}}}
\newcommand{\cs}{{c^*}}
\newcommand{\psib}{\bar{\psi}}
\newcommand{\ATF}{A_{\tilde{F}}}
\newcommand{\AF}{A_F}
\begin{document}

\title{\scshape The Voronoi formula on GL(3) with ramification}

\author{Fan Zhou}

\maketitle
\begin{abstract}
Firstly we prove that the Voronoi formula of Miller-Schmid type applies to automorphic forms on GL(3) for the congruence subgroup $\Gamma_0(N)$, when the conductor of the additive character in the formula is a multiple of $N$. 
As an application, we produce a result about the functional equation of $L$-function of the automorphic form on GL(3) twisted by Dirichlet characters. 
Secondly we prove that a similar formula applies to automorphic forms on GL(3) for the congruence subgroup $\Gamma_0(N)$, when the conductor of the additive character in the formula is coprime with $N$. 
\\
\\
MSC: 
11F30  (Primary), 11F55 
\\
\\
\end{abstract}
\tableofcontents
\section{Introduction}
A Voronoi formula is a Poisson-type summation formula involving the Fourier coefficients of an automorphic form, with the coefficients twisted by an additive character on either side. 
The Voronoi formula on $\GL{2}$ is a powerful standard tool to study automorphic forms on $\GL{2}$ and their $L$-functions.
The formula is explicity and implicit at several places (\cite{good, dukeiwaniec, jutila}).

Let $f$ be an automorphic form on $\GL{2}$ for $\Gamma_0(N)$ and a Dirichlet character $\psi: \mathbb{Z}/N\mathbb{Z}\to \mathbb{C}^\times$, with the $n^{\textrm{th}}$ Fourier coefficient $\lambda_f(n)$.
A Voronoi formula treats the expressions of the type
$$\sum_n \lambda_f(n)\exp\pra{2\pi i\frac{an}{c}}\omega(n),$$
where $(a,c)=1$ and $\omega$ is a test function on $\mathbb{R}$. 
There are two distinct scenarios for the relationship between $c$ and $N$: 
\begin{center}
$N|c\quad $  and $\quad (c, N)=1$. 
\end{center}
The first scenario is a formula 
$$\sum_n \lambda_f(n)\exp\pra{2\pi i\frac{an}{c}}\omega(n)=\frac{\psi(\bar a)}{c}\sum_n \lambda_f(n)\exp\pra{-2\pi i\frac{\bar an}{c}}\Omega\pra{\tfrac{n}{c^2}},$$ 
where $N|c$, $a\bar a\equiv 1\;\mod c$ and $\Omega$ is an integral transform of $\omega$. 
The second scenario is a formula 
$$\sum_n \lambda_f(n)\exp\pra{2\pi i\frac{an}{c}}\omega(n)=\frac{\psi (c)}{c\sqrt{N}}\sum_n \lambda_{\tilde{f}}(n)\exp\pra{-2\pi i\frac{\bar a\bar N n}{c}}\Omega\pra{\tfrac{n	}{c^2N}},$$ 
where $\tilde{ f}$ is the contragredient form of $f$ and $\bar N$ is a multiplicative inverse of $N\;\mod c$.
Both scenarios can be combined in \cite[Appendix]{kmv}, \cite{michel} and \cite{templier}  using the Atkin-Lehner-Li theory
and especially the Atkin-Li operator on $\GL 2$ from \cite{atkinli}. More precisely, in \cite{kmv}, at every finite place, either the first scenario or the second scenario happens, i.e., for every prime $p$ with $p^k||N$, either $p^k|c$ or $p\nmid c$. 

The Voronoi formulas in higher rank are more recent. The Voronoi formula for automorphic forms for $\ssl 3 Z$ was first discovered by Miller and Schmid in \cite{millerschmid1}. Other proofs and generalizations can be found in \cite{millerschmid2, ichinotemplier, goldfeldli1, goldfeldli2, zhou, kiralzhou, millerzhou}. In \cite{ichinotemplier}, a general formula is formulated by Ichino and Templier over number fields and 
it allows automorphic forms with ramification (instead of automorphic forms for $\ssl n Z$). The ramification is only allowed  at places disjoint from the additive character (the second scenario). 
There is also an interesting special case of Voronoi formula on GL(3) with ramification in \cite[Lemma 1.3]{buttcanekhan}, which is not covered by \cite{ichinotemplier}.
It is believed by \cite{millerschmid2} that a general formula can be achieved if some Atkin-Lehner information is available on $\GL{n}$ for $n\geq 3$.
So far no Atkin-Lehner theory is available on $\GL{n}$ with $n\geq 3$. 

In this paper, we firstly introduce a Voronoi formula of Miller-Schmid type (Theorem \ref{main}) for automorphic forms on $\GL{3}$ of level $N$ for the congruence subgroup $\Gamma_0(N)$ when 
the conductor of the additive character is a multiple of the level $N$
(i.e., $N|c$, analogous to the first scenario) without involving any Atkin-Lehner theory on $\GL{3}$. 
The case treated in this paper is the exact opposite of \cite{ichinotemplier}.
Let $F$ be a Maass form for $\Gamma_0(N)$ with a Dirichlet character modulo $N$.  
Let $A_F(\;,\;)$ be the Fourier-Whittaker coefficient and let $m$ be a nonzero integer. For $c$ with $N|c$ we have the formula
	\begin{align}\nonumber
	\sum_{n=1}^\infty &A_F(n,m)\exp\pra{2\pi i\frac{\bar an}{c}}\omega(n)
	\\
	&=c\psi(a)\sum_{m_2\neq 0}\sum_{m_1|cm}\frac{A_F(m_1,m_2)}{m_1|m_2|} S\pra{am,m_2;\tfrac{cm}{m_1}}\Omega\pra{\tfrac{m_1^2|m_2|}{c^3|m|}},\label{intro_voronoi1}
\end{align}
where $S(\;,\;;\;)$ denotes classical Kloosterman sum.
Secondly, we develop a 
Voronoi formula of Miller-Schmid type (Theorem \ref{voronoi2}) for automorphic forms on $\GL{3}$ of level $N$ for the congruence subgroup $\Gamma_0(N)$ when the conductor of the additive character is coprime with the level $N$
(i.e., $(c,N)=1$, analogous to the second scenario). 
For $(m,N)=(c,N)=1$ we have the formula 
	\begin{align}\nonumber
	\sum_{n=1}^\infty & A_F(m,n) \exp\pra{2\pi i\frac{\bar a n}{c}} \omega(n)
	\\&=c\psi(cm) \epsilon(F) \sum_\pm\sum_{m_2=1}^\infty\sum_{m_1|cm}
	\frac{ A_{\tilde{F}}(m_1,m_2)}{m_1m_2} S\pra{\pm\bar N  am,m_2;\tfrac{cm}{m_1}}\Omega_\pm \pra{\tfrac{m_1^2m_2}{c^3mN}},\label{intro_voronoi2}
	\end{align}
where $\tilde{F}$ is the contragredient form of $F$ and $\bar N N\equiv 1 \;\mod c$. 

\nothing{
\blue{\begin{remark}
I am able to switch the position of $m$ with that of $n$ in \eqref{intro_voronoi2} but I am not able to do that in \eqref{intro_voronoi1}. So now \eqref{intro_voronoi1} looks different than the usual Voronoi formula of Miller-Schmid.
\end{remark}
\begin{remark}
The ultimate goal would be to combine \eqref{intro_voronoi1} and \eqref{intro_voronoi2} into one formula like \cite{kmv}, i.e.,
$$(c, \tfrac{N}{(c,N)})=1.$$
Ideally the goal could be reached by using Atkin-Li operator (which does not exist on $\GL 3$ yet). But now, I don't even have Fricke involution on $\GL 3$. 
\end{remark}
}
}

The author believes that the analogous formulas of Miller-Schmid type should also apply to automorphic forms on $\ggl n R$ for the congruence subgroup $\Gamma_0(N)$. 

The proof of \eqref{intro_voronoi1} is similar to and different from that of \cite{goldfeldli1} by Goldfeld and Xiaoqing Li. In \cite[Section 3]{goldfeldli1}, 
for a Maass cusp form $F$ for $\ssl 3 Z$,  its Voronoi formula involves Fourier coefficients of both $F$ and its contragredient $\tilde F$. 
This would prevent its proof from being generalized to the case of level $N$.
In this paper we realize that the contragredient $\tilde F$ is unnecessary in the formulation and the proof of the Voronoi formula. 
Moreover, in \cite{goldfeldli1}, the proof is based upon that $\tilde F$ is invariant under $\pra{\begin{smallmatrix}
&&1\\1&&\\&1&
\end{smallmatrix}}$ or another Weyl group element,
whereas in Theorem \ref{main} we 
use the invariance of $F$ under
$\pra{\begin{smallmatrix}
1&&\\
&a&b\\
&c&d
\end{smallmatrix}}$. The other parts of our proof are heavily indebted to \cite{goldfeldli1}, and other sources such as Goldfeld and Thillainadesan's proof of the converse theorem on $\GL 3$  in \cite{goldfeld}.


The classical theory of Maass cusp forms for $\ssl 3 Z$ has been developed in \cite{bump, goldfeld}. 
The classical theory of Maass forms on $\GL 3$ for $\Gamma_0(N)$ is still in its infancy. 
The dissertation  \cite{balakci} of Balakci studies the Eisenstein series on $\GL 3$ for $\Gamma_0(N)$.
So does \cite[Chapter 12]{goldfeldhundley} define the classical theory of automorphic forms for $\ggl n R$. 
We will not introduce a precise definition of cuspidality for Maass forms for $\Gamma_0(N)$ because that is unnecessary for the Voronoi formula in Theorem \ref{main}.
For the Voronoi formula of Theorem \ref{main} we only need Maass forms being cuspidal at one cusp, instead of being cuspidal at every cusp. 
In Section \ref{section_background}, we will  define Maass forms on $\GL 3$ for $\Gamma_0(N)$ and its Hecke operators.

Later, the proof of \eqref{intro_voronoi2} uses the method of double Dirichlet series and Gauss sums of the author and Eren Mehmet K{\i}ral in \cite{kiralzhou}.
Theorem \ref{voronoi2} carries on the same formalism from \cite{kiralzhou} and it is based upon a different set of axioms than those used in Theorem \ref{main}. 
In \cite{kiralzhou}, for an automorphic form $f$ for $\ssl n Z$, the Voronoi formulas are proved to be equivalent to the family of functional equation of $L(s,f\times \chi)$ when $\chi$ varies over all the Dirichlet characters. 
In Theorem \ref{voronoi2} we exploit this equivalence further by allowing $f$ to have ramification at finite places.
 Theoretically \eqref{intro_voronoi2} could be recovered from \cite{ichinotemplier} if everything is evaluated explicitly and matched with the classical language.

From the point of view of the theory of automorphic representations, the contrast between the first scenario and the second scenario can be explained by $L$-factor and $\epsilon$-factor.
Let $\pi_p$ be a representation of $\GL n(\mathbb{Q}_p)$ and $\chi_p$  a representation of $\GL 1 (\mathbb{Q}_p)$ which will vary.
If $\pi_p$ is unramified, then $L(s,\pi_p\times \chi_p)$ and $\epsilon(s,\pi_p\times \chi_p)$ 
are very predicable when $\chi_p$ varies over all representations of $\GL 1 (\mathbb{Q}_p)$.
If $\pi_p$ is ramified, then 
$L(s,\pi_p\times \chi_p)$ and $\epsilon(s,\pi_p\times \chi_p)$ 
are very predicable when $\chi_p$ varies over all unramified representations of $\GL 1 (\mathbb{Q}_p)$.
That is how the second scenario works in Theorem \ref{voronoi2} and \cite{kiralzhou}.
Naturally we would like to explore this further. If $\pi_p$ is ramified, unfortunately 
$L(s,\pi_p\times \chi_p)$ and $\epsilon(s,\pi_p\times \chi_p)$ 
are not fully predicable when $\chi_p$ varies over ramified representations of $\GL 1 (\mathbb{Q}_p)$.
In the classical theory on $\GL 2$, this problem is largely resovled by the Atkin-Lehner-Li theory, especially those in \cite{atkinli}.
On a side note,  the stability of the local gamma and epsilon factors at a non-archimedean place should predict that 
when $\chi_p$ is ramified enough relative to $\pi_p$, we have
$$L(s,\pi_p\times \chi_p)\equiv 1$$
and $$\gamma(s,\pi_p\times \chi_p):= \frac{\epsilon(s,\pi_p\times\chi_p)L(1-s,\tilde{\pi}_p\times\bar\chi_p)}{L(s,\pi_p\times\chi_p)}$$ 
ignore most information about $\pi_p$,
as in \cite[\S 4]{atkinli}, \cite{jacquetshalika} and Section \ref{sectionapplication}, as an application of Theorem \ref{main}. 
In Theorem \ref{theorem_L_function}, we will prove that when a Dirichlet character $\chi$ is ramified enough at $p|N$ then
we have the functional equation
$$L\levelN(s,F\times \chi)L_\infty(s,F\times \chi)=c^{-3s}\tau(\psi\chi)\tau(\chi)^2\cdot L\levelN(1-s,\tilde{F}\times\bar\chi)L_\infty(1-s,\tilde{F}\times\bar\chi),$$
where $L\levelN(s,F\times \chi)$ only carries the unramified information of $F$.

Lastly, the absence of the Atkin-Li operator on $\GL 3$ makes it unlikely to combine \eqref{intro_voronoi1} and \eqref{intro_voronoi2} into one formula, as \cite{kmv} does on $\GL 2$. 
It is very valuable to develop a newform theory on $\GL 3$ and $\GL n$.

\subsection*{Acknowledgment}
The author would like to thank Jack Buttcane, James Cogdell, Eren Mehmet K{\i}ral, Yuk-Kam Lau, Stephen David Miller for helpful discussion.
\nothing{
\section{The GL(2) cases}
Define $e(x):=e^{2\pi i x}$.
Let $f$ be a cuspidal modular form of weight $k$ for $\mathbf{\Gamma}_0(N)\subseteq \ssl 2 Z$. 
It has Fourier expansion
$$f(z)=\sum_{n=1}^\infty \lambda_f(n) n^{\frac{k-1}{2}}e^{2\pi i z}.$$
Let $\pra{\begin{smallmatrix}
a&b\\c&d
\end{smallmatrix}}\in \mathbf{\Gamma}_0(N) $ with $N|c$. 
We have $$f\pra{\frac{az+b}{cz+d}}=(cz+d)^kf(z).$$
Take $z=-\frac{d}{c}+iy$. 
We have $$f\pra{\frac{a}{c}+\frac{i}{c^2y}}=(icy)^kf\pra{-\frac{d}{c}+iy}$$
Applying Mellin transform to both sides, we get
$$\pra{\frac{c}{2\pi}}^s\Gamma\pra{s+\frac{k-1}{2}}L(s,f,\frac{a}{c})=i^{-k}\pra{\frac{c}{2\pi}}^{1-s}
\Gamma\pra{1-s+\frac{k-1}{2}}
L(1-s,f,-\frac{\bar a}{c}),$$
where $L(s,f,\frac{a}{c})=\sum_n \lambda_f(n)e(\frac{an}{c})n^{-s}$.

Let $f$ be a cuspidal modular form of weight $k$ for $\mathbf{\Gamma}_0(N)\subseteq \ssl 2 Z$. 
Define $g(z):=(\sqrt{N}z)^{-k}f(-\frac 1 {Nz})$, which is a cuspidal modular form of weight $k$ for $\mathbf{\Gamma}_0(N)$.
Let $\pra{\begin{smallmatrix}
a&b\\c&d
\end{smallmatrix}}\in \mathbf{\Gamma}_0(N) $ with $N|c$. 
We have 
$$f(z)=\pra{\sqrt{N}\pra{-c/N+dz}}^{-k}g\pra{\frac{-a/N+bz}{-c/N+dz}}.$$
Take $z=\frac{c}{Nd}+iy$. 
We have $$f\pra{\frac{c}{Nd}+iy}=(i\sqrt N dy)^{-k}g\pra{\frac{b}{d}+\frac{i}{Nd^2y}}$$
Applying Mellin transform to both sides, we get
$$\pra{\frac{d\sqrt{N}}{2\pi}}^s\Gamma\pra{s+\frac{k-1}{2}}L(s,f,\frac{c}{dN})=i^{-k}\pra{\frac{d\sqrt{N}}{2\pi}}^{1-s}
\Gamma\pra{1-s+\frac{k-1}{2}}
L(1-s,g,\frac{b}{d}),$$
i.e.,
$$\pra{\frac{d\sqrt{N}}{2\pi}}^s\Gamma\pra{s+\frac{k-1}{2}}L(s,f,-\frac{\bar b\bar N}{d})=i^{-k}\pra{\frac{d\sqrt{N}}{2\pi}}^{1-s}
\Gamma\pra{1-s+\frac{k-1}{2}}
L(1-s,g,\frac{b}{d}),$$
where $\bar x$ is the multiplicative inverse of $x$ mod $d$.

}

\section{The first scenario}
\subsection{Background on automorphic forms}\label{section_background}
The classical theory of modular forms for the congruence subgroup $\Gamma_0(N)\subset \ssl 2 Z$ has been well developed and recorded in excellent expositions, such as \cite{miyake}. 

The theory of Maass cusp forms for  $\ssl 3 Z$ and $\ssl n Z$ has been well recorded by \cite{bump, goldfeld} and these Maass forms are corresponding to unramified automorphic representations of $\ggl n {A_Q}$. 
It is desirable to develop the theory of automorphic forms on $\ggl n  R$ for the congruence subgroup $\Gamma_0(N)\subset \ssl n Z$, and they will correspond to automorphic representations of $\ggl n {A_Q}$ with ramification at finite places. 
In the following we will discuss how Maass forms on $\ggl 3  R$ for $\Gamma_0(N)$ should be developed. Many results in \cite{bump, goldfeld} can be carried on to $\Gamma_0(N)$ with no or little modification. I assume that the reader is familiar with \cite[Section 5, 6]{goldfeld}.

Let $\mathfrak{h}=\ggl 3 R/(\mathrm O _3(\mathbb R)\cdot \mathbb R ^\times)$
denote the generalized upper half plane. Each $z\in \mathfrak h$ has the form
$z=xy$ by the Iwasawa decomposition, where 
$$x=\begin{pmatrix}
1&x_2&x_3\\
&1&x_1\\
&&1
\end{pmatrix}\quad \text{ and } \quad
y=\begin{pmatrix}
y_1y_2&&\\
&y_1&\\
&&1
\end{pmatrix}
$$
and $x_i, y_i \in \mathbb R$ and $y_i >0$.	
Let $\mathcal{ D}$ denote the  the center of the universal
enveloping algebra of $\mathrm{\mathfrak{gl}}_3(\mathbb{R})$. 
Define the congruence subgroup $\Gamma_0(N)$ of $\ssl 3 Z$ by 
$$\Gamma_0(N) :=\left\{g\in \ssl 3 Z:g\equiv \begin{pmatrix}
*&*&*\\ *&*&*\\ 0&0&*\end{pmatrix}  \mod N \right\}.$$

The Hilbert space $\mathcal{L}^2(\Gamma_0(N)\setminus\mathfrak{h},\psi)$
denotes the space of functions $F$
with 
$$\int_{\Gamma_0(N)\setminus \mathfrak h}  |F(z)|^2\dd^* z<\infty$$
where $\dd^* z=\dd x_1\dd x_2\dd x_3 \tfrac{\dd y_1\dd y_2}{(y_1y_2)^3}$ and 
$$F\pra{gz}=\psi(g_{33})F(z), \text{ for all }g=\begin{pmatrix}
g_{11 }&g_{12 }&g_{13 }\\
g_{21 }&g_{22 }&g_{23 }\\
g_{31 }&g_{32 }&g_{33 }
\end{pmatrix} \in \Gamma_0(N).$$
Define the Petersson inner product on $\mathcal{L}^2(\Gamma_0(N)\setminus\mathfrak{h},\psi)$
$$\left<F,G\right>:=\int_{\Gamma_0(N)\setminus \mathfrak h}  F(z)\overline{G(z)}\dd^* z.$$

A Maass form for $\Gamma_0(N)$ with a Dirichlet character $\psi: \mathbb{Z}/N\mathbb{Z}\to \mathbb {C}^\times$
is a smooth function $F\in \mathcal{L}^2(\Gamma_0(N)\setminus\mathfrak{h},\psi)$ which satisfies 
\begin{itemize}
\item $F\pra{gz}=\psi(g_{33})F(z), \text{ for all }z\in \mathfrak h, \; g=\begin{pmatrix}
g_{11 }&g_{12 }&g_{13 }\\
g_{21 }&g_{22 }&g_{23 }\\
g_{31 }&g_{32 }&g_{33 }
\end{pmatrix} \in \Gamma_0(N)$;
\item 
for all $D\in \mathcal D$, $F$ is an eigenfunction of $D$;
\item $\int_{0}^1\int_{0}^1 F\pra{\begin{pmatrix}
1&&u_3\\&1&u_1\\&&1
\end{pmatrix}
z}\dd u_1\dd u_3=0$
 and 
$\int_{0}^1\int_{0}^1 F\pra{\begin{pmatrix}
1&u_2&u_3\\&1&\\&&1
\end{pmatrix}
z}\dd u_2\dd u_3=0$ for all $z\in \mathfrak{h}$.
\end{itemize}

\begin{remark}
We do not call $F$ a Maass \textit{cusp} form because the last condition only requires cuspidality at one cusp.
\end{remark}

Define $e(x):=\exp(2\pi ix)$.
Because $F$ is invariant under $\pra{
\begin{smallmatrix} * & * & *\\
* & * & *\\
0 & 0 & *\\
\end{smallmatrix}
  }$, it has Fourier-Whittaker expansion by \cite[Theorem 0.3]{balakci} and \cite[Theorem 5.3.2]{goldfeld}, 
\begin{align}\nonumber
F(z)&
\nothing{=\sum_{\pra{\begin{smallmatrix}A&B\\C&D\end{smallmatrix}}  \in U_2(\mathbb{Z})\setminus \ssl 2 Z} \sum_{m_1=1}^\infty \sum_{m_2\neq 0} \frac{A_F(m_1,m_2)}{m_1|m_2|}
\WJ\pra{
\begin{pmatrix}
m_1|m_2| & &\\
&m_1&\\
&&1
\end{pmatrix}
\begin{pmatrix}
A&B &\\
C&D&\\
&&1
\end{pmatrix}
z}\\}
=\sum_{
\pra{\begin{smallmatrix}A&B\\C&D\end{smallmatrix}}
\in U_2(\mathbb{Z})\setminus \ssl 2 Z} \sum_{m_1=1}^\infty \sum_{m_2\neq 0} \frac{A_F(m_1,m_2)}{m_1|m_2|}e\pra{m_1\pra{Cx_3+Dx_1}+m_2\Re \frac{A z_2+B}{C z_2 +D}} 
\\
&\spaces \quad  \;\quad \;\quad \WJ\pra{\begin{pmatrix}
m_1|m_2| & &\\
&m_1&\\
&&1
\end{pmatrix}
\begin{pmatrix}
\frac{y_1y_2}{|Cz_2+D|} & &\\
&y_1|Cz_2+D|&\\
&&1
\end{pmatrix}
}.\label{fourierexpansion}
\end{align}

The Jacquet's Whittaker  function $\WJ$ is defined in \cite[Section 5.5]{goldfeld}. We only need the Jacquet's Whittaker function on $\GL 3$, which can also be found in \cite[(2.10)]{blomer}.  We follow the notation \textit{ibid.} for $\mathcal{W}^\pm_{\nu_1,\nu_2}$
 and we have
$$\mathcal{W}^\pm_{\nu_1,\nu_2}\pra{\begin{pmatrix}
1&x_2&x_3\\
&1&x_1\\
&&1
\end{pmatrix}
\begin{pmatrix}
y_1y_2&&\\
&y_1&\\
&&1
\end{pmatrix}
}
=e(x_1\pm x_2)\WJ\pra{\begin{pmatrix}
y_1y_2&&\\
&y_1&\\
&&1
\end{pmatrix}
}.
$$
Here we suppress the spectral parameter $(\nu_1, \nu_2)$ from $\WJ$. 
The spectral parameter $(\nu_1, \nu_2)$ is determined by the eigenvalues of $F$ under the differential operators $D\in \mathcal{D}$. In the rest of this paper, we will only use $\WJ$ but not $\mathcal{W}_{\nu_1,\nu_2}^\pm$. 
We define for convenience
\begin{align*}
&\alpha:=-\nu_1-2\nu_2+1\\
&\beta:=-\nu_1+\nu_2\\
&\gamma:=2\nu_1+\nu_2-1
\end{align*}
and 
$$\mathsf C:=\pi^{\frac 1 2- 3\nu_1-3\nu_2}\Gamma\pra{\frac{3\nu_1}{2}}\Gamma\pra{\frac{3\nu_2}{2}}\Gamma\pra{\frac{3\nu_1+3\nu_2-1}{2}}.$$

The Fourier-Whittaker coefficient $A_F(m_1,m_2)$ can be obtained by 
\begin{align}\nonumber
\int_0^1&\int_0^1\int_0^1
F\pra{
\begin{pmatrix}
1&u_2&u_3\\
&1&u_1\\
&&1
\end{pmatrix}
z}e(-m_1u_1-m_2u_2)\dd u_1\dd  u_2\dd u_3
\\&=\frac{A_F(m_1,m_2)}{|m_1m_2|}e(m_1x_1+m_2x_2)\WJ\pra{\begin{pmatrix}
|m_1m_2| & &\\
&|m_1|&\\
&&1
\end{pmatrix}
\begin{pmatrix}
y_1y_2&&\\
&y_1&\\
&&1
\end{pmatrix}
}\label{fourierwhittaker}
\end{align}
for non-zero integers $m_1$ and $m_2$.

Because of 
$$F(z)=F\pra{\begin{pmatrix}
-1&&\\&-1&\\&&1
\end{pmatrix}
z}=\psi(-1)F\pra{\begin{pmatrix}
-1&&\\&1&\\&&-1
\end{pmatrix}
z}=\psi(-1)F\pra{\begin{pmatrix}
1&&\\&-1&\\&&-1
\end{pmatrix}
z}$$
and 
$$
\begin{pmatrix}
\delta_1&&\\&\delta_2&\\&&\delta_3
\end{pmatrix} 
\begin{pmatrix}
1&x_2&x_3
\\
&1&x_1\\
&&1
\end{pmatrix}
\begin{pmatrix}
\delta_1&&\\&\delta_2&\\&&\delta_3
\end{pmatrix} ^{-1}
=\begin{pmatrix}
1&\delta_1\delta_2^{-1}x_2&\delta_1\delta_3^{-1}x_3
\\
&1&\delta_2\delta_3^{-1} x_1\\
&&1
\end{pmatrix}
$$
we have the following from \eqref{fourierwhittaker}. 
This is analogous to \cite[Proposition 6.3.5]{goldfeld}

\begin{proposition}\label{even_odd}
Let $F$ be a Maass form for $\Gamma_0(N)$ with a Dirichlet character $\psi$.
We have $$A_F(\pm m_1,(-1)^k m_2)=\psi(-1)^k A_F(m_1,m_2)$$
for $k=0,1$ and positive integers $m_1$ and $m_2$.
\end{proposition}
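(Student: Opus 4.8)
The plan is to feed each of the three diagonal sign-symmetries of $F$ displayed just before the statement into the Fourier–Whittaker integral \eqref{fourierwhittaker} and read off how the coefficient $A_F(m_1,m_2)$ transforms, following the template of \cite[Proposition 6.3.5]{goldfeld}. Write $\delta=\mathrm{diag}(\delta_1,\delta_2,\delta_3)$ with $\delta_i\in\{\pm1\}$ and $\det\delta=1$ for one of the three matrices $\mathrm{diag}(-1,-1,1)$, $\mathrm{diag}(-1,1,-1)$, $\mathrm{diag}(1,-1,-1)$, and let $\epsilon_\delta\in\{1,\psi(-1)\}$ be the corresponding scalar, so that $F(w)=\epsilon_\delta F(\delta w)$ for all $w\in\mathfrak h$.

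First I would record the action of $\delta$ on the point $z=xy$. Since every such $\delta$ is orthogonal, $\delta\in\mathrm O_3(\mathbb R)$, and because the quotient defining $\mathfrak h$ is taken on the right, the factor $\delta$ can be absorbed: $\delta z=\delta xy=(\delta x\delta^{-1})(\delta y)=(\delta x\delta^{-1})y\,\delta\equiv(\delta x\delta^{-1})y$ in $\mathfrak h$. Thus $\delta$ fixes the $y$-part and acts on the $x$-part exactly by the conjugation formula displayed above, sending the coordinates $(x_1,x_2,x_3)$ to $(\delta_2\delta_3 x_1,\delta_1\delta_2 x_2,\delta_1\delta_3 x_3)$. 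This is the one genuinely geometric step, and it is where I expect the only real care to be needed.

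Next, in the integral defining $A_F(m_1,m_2)$ via \eqref{fourierwhittaker}, I would replace $F(uz)$ by $\epsilon_\delta F(\delta u z)=\epsilon_\delta F\big((\delta u\delta^{-1})\,\delta z\big)$, noting that $\delta u\delta^{-1}$ is again unipotent with its coordinates scaled by the same signs. Changing variables $u_1\mapsto \delta_2\delta_3 u_1$, $u_2\mapsto \delta_1\delta_2 u_2$, $u_3\mapsto\delta_1\delta_3 u_3$ (Jacobian $1$), the exponential becomes $e(-\delta_2\delta_3 m_1 u_1-\delta_1\delta_2 m_2 u_2)$, and the integration cube is restored to $[0,1]^3$ using periodicity of the integrand: integer unipotent translations lie in $\Gamma_0(N)$ with lower-right entry $1$, so $F$ is unchanged under them while $e(\cdot)$ picks up only integer shifts. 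Comparing the result with \eqref{fourierwhittaker} applied at the base point $\delta z=(\delta x\delta^{-1})y$ — whose $x$-coordinates and the new frequencies conspire so that the prefactor $e(m_1 x_1+m_2 x_2)$ and the Whittaker factor are reproduced identically, since $|\delta_i\delta_j|=1$ and the signs cancel in $(\delta_2\delta_3 m_1)(\delta_2\delta_3 x_1)=m_1x_1$ and similarly for the other term — yields the master identity
$$A_F(m_1,m_2)=\epsilon_\delta\,A_F(\delta_2\delta_3\, m_1,\ \delta_1\delta_2\, m_2).$$

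Finally I would specialize. Taking $\delta=\mathrm{diag}(-1,-1,1)$ (so $\epsilon_\delta=1$) gives $A_F(m_1,m_2)=A_F(-m_1,m_2)$; taking $\delta=\mathrm{diag}(1,-1,-1)$ (so $\epsilon_\delta=\psi(-1)$) gives $A_F(m_1,m_2)=\psi(-1)A_F(m_1,-m_2)$; the matrix $\mathrm{diag}(-1,1,-1)$ produces the remaining combination and serves as a consistency check, since $\psi(-1)^2=1$. Using $\psi(-1)^{-1}=\psi(-1)$ and combining these relations produces all four sign patterns $A_F(\pm m_1,(-1)^k m_2)=\psi(-1)^k A_F(m_1,m_2)$ for positive $m_1,m_2$ and $k=0,1$, which is the claim. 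The only delicate points are the $\mathrm O_3(\mathbb R)$-absorption of the first step and the domain bookkeeping in the second; everything else is a direct match against \eqref{fourierwhittaker}.
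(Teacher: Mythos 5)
Your proof is correct and takes essentially the same route as the paper: the paper derives the proposition precisely from the displayed invariances of $F$ under the three diagonal sign matrices and the conjugation formula for the unipotent coordinates, fed into the coefficient integral \eqref{fourierwhittaker}, in analogy with Goldfeld's Proposition 6.3.5. Your write-up simply fills in the details (absorption of $\delta\in\mathrm{O}_3(\mathbb{R})$ on the right, the sign change of variables, and the periodicity bookkeeping) that the paper leaves implicit.
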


For 
$
z_1
=x_1+iy_1
$
and for $\sigma=\begin{pmatrix}
a&b\\
c&d
\end{pmatrix} \in \ssl 2 Z$
we will use the notation for linear fractional transformation
$$\sigma z_1 = \frac{az_1+b}{cz_1+d}.$$
We will use the two basic facts 
$$\Re \sigma z_1 = \frac{a}{c}-\Re\frac{1}{c(cz_1+d)}$$
and
$$\Im \sigma z_1 = \frac{\Im z_1}{|cz_1+d|^2}. $$

\subsection{Hecke operators}\label{subsection_hecke_operator}
Let $F$ be a Maass form for $\Gamma_0(N)$ and a Dirichlet character $\psi$, as defined in Section \ref{section_background}. 
For a positive integer $n$, define the Hecke operator $T_n$ by
\begin{equation}
T_n F(z):=\frac{1}{n}\sum_{abc=n}\sum_{0\leq b_1<b}\sum_{0\leq c_1,c_2<c}
\psi(ab)F\pra{
\begin{pmatrix}
a&b_1&c_1\\
&b&c_2\\
&&c
\end{pmatrix}
z}.
\end{equation}
Let $T^*_n$ be the adjoint operator of $T_n$ which satisfies
$$\left<T_n F, G\right>=\left<F,T^*_n G\right >$$
for any $F, G\in \mathcal{L}^2(\Gamma_0(N)\setminus\mathfrak{h},\psi)$. 
Explicitly the adjoint operator $T_n^*$ is 
\begin{equation}
T^*_n F(z):=\frac{1}{n}\sum_{abc=n}\sum_{0\leq b_1<b}\sum_{0\leq c_1,c_2<c}
\overline{\psi(ab)}F\pra{
\begin{pmatrix}
a&b_1&c_1\\
&b&c_2\\
&&c
\end{pmatrix}^{-1}
z}.
\end{equation}

Let us assume that $F$ is an eigenfunction under $T_n$ and $T^*_n$ for all $n $ with $(n,N)=1$.
Let us assume  $A_F(1,1)=1$ for normalization. Admittedly there could be Maass forms for $\Gamma_0(N)$ with $A_F(1,1)=0$. But so far the newform theory or the Atkin-Lehner-Li theory on $\GL 3$ has not been built. 
By the standard method of automorphic forms (\cite[Theorem 6.4.11]{goldfeld}), we have 
$$T_nF=A_F(n,1)F.$$
Moreover we have
$$T^*_nF=\overline{\psi(n)}A_F(1,n)F    $$
and 
$$A_F(n,1)=\psi(n)\overline{A_F(1,n)}$$
with $(n,N)=1$ after comparing the eigenvalues of $T_n$ and $T_n^*$.
By the same method we have the Hecke relations
\begin{equation}
A_F(n,1)A_F(n_2, n_1)=\sum_{abc=n, a|n_1, b|n_2}\psi (ab)A_F(\tfrac{cn_2}b,\tfrac{ bn_1}a) \label{heckerelationsection1}
\end{equation}
and
\begin{equation}\label{heckerelationsection2}
A_F(1,n)A_F(n_2, n_1)=\sum_{abc=n, b|n_1, c|n_2}\psi (c)A_F(\tfrac{bn_2}c,\tfrac{ an_1}b)
\end{equation}
with $(n,N)=1$ and $n_1,n_2\in\mathbb Z$.

\begin{proposition}
Let us assume that the Maass form $F$ is an eigenfunction under $T_n^*$ for some $n$ with $(n,N)=1$ and assume $A_F(1,1)=1$. 
Then we have 
			$$T^*_nF=\overline{\psi(n)}A_F(1,n)F    $$
and 
			$$A_F(1,n)A_F(n_2, n_1)=\sum_{abc=n, b|n_1, c|n_2}\psi (c)A_F(\tfrac{bn_2}c,\tfrac{ an_1}b).$$
\end{proposition}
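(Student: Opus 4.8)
The plan is to run the standard unfolding of the Hecke operator against the Fourier--Whittaker expansion, but tracking only the single adjoint operator $T_n^*$. Since $F$ is assumed to be an eigenfunction of $T_n^*$, I write $T_n^* F = \lambda F$ for some scalar $\lambda$. The assignment $G \mapsto A_G(m_1,m_2)$ is linear and is read off from the integral \eqref{fourierwhittaker}, so comparing the $(m_1,m_2)$-th Fourier--Whittaker coefficient of the two sides gives
$$A_{T_n^* F}(m_1,m_2) = \lambda\, A_F(m_1,m_2)$$
for all nonzero $m_1,m_2$. Everything then reduces to computing the left-hand side explicitly in terms of the coefficients of $F$.

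The crux is to establish the divisor-sum identity
$$A_{T_n^* F}(m_1,m_2) = \sum_{\substack{abc=n\\ c\mid m_1,\; b\mid m_2}} \overline{\psi(ab)}\, A_F(\tfrac{bm_1}{c}, \tfrac{am_2}{b}), \qquad (\star)$$
which I would prove by inserting the definition of $T_n^*$ into the coefficient integral \eqref{fourierwhittaker}. Since we work on $\mathfrak{h}$, where scalars act trivially, each representative $\left(\begin{smallmatrix} a&b_1&c_1\\ &b&c_2\\ &&c\end{smallmatrix}\right)^{-1}$ may be replaced by the integral matrix $\left(\begin{smallmatrix} bc& -b_1 c& b_1 c_2 - b c_1\\ &ac& -ac_2\\ &&ab\end{smallmatrix}\right) = n\left(\begin{smallmatrix} a&b_1&c_1\\ &b&c_2\\ &&c\end{smallmatrix}\right)^{-1}$. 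Substituting the Fourier expansion of $F$ from \eqref{fourierexpansion} and commuting this matrix past the unipotent translation inside the $u_1,u_2,u_3$-integral, the residue sums over $b_1$ and $c_1,c_2$ combine with those integrals to force the divisibility $c\mid m_1$, $b\mid m_2$ and to collapse the Whittaker sum to the single term appearing in $(\star)$; the normalizing factor $\tfrac1n$ together with the rescaling of the Whittaker argument reproduces the coefficient normalization, and the explicit character of $T_n^*$ contributes the factor $\overline{\psi(ab)}$, exactly as in the $\ssl 3 Z$ computation of \cite[Theorem 6.4.11]{goldfeld} adapted to $\Gamma_0(N)$ and to the inverse representatives of the adjoint. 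I expect this bookkeeping to be the main obstacle, and it is here that the hypothesis $(n,N)=1$ must be used: it guarantees that $T_n^*$ preserves $\mathcal{L}^2(\Gamma_0(N)\setminus\mathfrak{h},\psi)$ and makes every value $\psi(ab)$, $\psi(c)$ well-defined.

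Granting $(\star)$, both assertions follow by specialization. Taking $(m_1,m_2)=(1,1)$, the conditions $c\mid 1$ and $b\mid 1$ force $b=c=1$ and $a=n$, so $(\star)$ reduces to the single term $\overline{\psi(n)}A_F(1,n)$; combined with $A_F(1,1)=1$ and the coefficient comparison above this yields $\lambda=\overline{\psi(n)}A_F(1,n)$, which is the first claim. For the Hecke relation I take $(m_1,m_2)=(n_2,n_1)$, so that the left-hand side equals $\lambda\,A_F(n_2,n_1)=\overline{\psi(n)}A_F(1,n)\,A_F(n_2,n_1)$. Writing $\overline{\psi(ab)}=\overline{\psi(n)}\psi(c)$ for each factorization $abc=n$ and cancelling the nonzero root of unity $\overline{\psi(n)}$ gives
$$A_F(1,n)\,A_F(n_2,n_1) = \sum_{\substack{abc=n\\ b\mid n_1,\; c\mid n_2}} \psi(c)\, A_F(\tfrac{bn_2}{c}, \tfrac{an_1}{b}),$$
as required.
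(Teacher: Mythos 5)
Your proposal is correct and takes essentially the same approach as the paper: the paper's entire proof is a one-sentence deferral to the unfolding computation of Goldfeld's Theorem 6.4.11, which is exactly what you implement, adapted to the adjoint coset representatives via the observation that $n M^{-1}$ is the integral adjugate matrix and that scalars act trivially on $\mathfrak{h}$. Your target identity $(\star)$, the specialization at $(m_1,m_2)=(1,1)$ giving the eigenvalue $\overline{\psi(n)}A_F(1,n)$, and the specialization at $(n_2,n_1)$ combined with $\overline{\psi(ab)}=\overline{\psi(n)}\psi(c)$ are precisely equivalent to the two assertions of the proposition, so the reduction is sound and at least as detailed as the paper's own argument.
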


\begin{proof}The proof would be very similar to that of \cite[Theorem 6.4.11]{goldfeld}
\end{proof}

Let $\tilde{F}$ be the hypothetical contragredient form of $F$. 
Let $A_{\tilde F}(m,n)$ be the Fourier-Whittaker coefficient of $\tilde{F}$ and we have for positive integers $m,n$ with $(mn,N)=1$
$$A_F(m,n)=\psi(m)\psi(n)A_{\tilde F}(n,m).$$
Unlike the case on $\GL 2$ we so far are not able to define $\tilde{F}$ precisely when $N>1$.

\subsection{The left side}
Let $m\neq 0$ be an integer. 
To prove \eqref{intro_voronoi1} we will construct the integral $H(x_1;y_1,y_2)$, which is similar to  but  different from
\cite[(3.3)]{goldfeldli1}. We define the integral 
$$H(x_1;y_1,y_2):=\psi(d)\int_0^c\int_0^1    F\pra{\begin{pmatrix}
1&x_2&x_3+\frac{dx_2}{c}+x_1x_2\\
&1&x_1\\
&&1
\end{pmatrix}
\begin{pmatrix}
y_1y_2&&\\
&y_1&\\
&&1
\end{pmatrix}} e(-mx_2) \dd x_3 \dd x_2.$$

\begin{lemma}
For $k=0$ or $1$ and for fixed $y_2$,  the function 
$\left.\pra{\frac{\partial }{\partial x_1}}^k H(x_1;y_1,y_2)\right|_{x_1=-\frac{d}{c}}$
has rapid decay as $y_1\to \infty$.	
\end{lemma}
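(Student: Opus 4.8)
The plan is to establish the rapid decay of $H$ and its first $x_1$-derivative by exploiting the cuspidality of $F$ at the single cusp we have access to, exactly as encoded in the third defining condition of a Maass form. The key observation is that $H$ is built from $F$ by integrating over $x_2$ and $x_3$ against an additive character, so I would first attempt to unfold $H$ into the Fourier--Whittaker expansion \eqref{fourierexpansion} of $F$ and then read off the decay directly from the decay of the Jacquet Whittaker function $\WJ$ as its $y$-argument grows.

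\paragraph{First steps.}
I would begin by simplifying the matrix argument of $F$ inside the integral defining $H$. The unipotent entries $x_3 + \tfrac{dx_2}{c} + x_1 x_2$ are arranged precisely so that, after a change of variables in $x_3$, the integration in $x_3$ over $[0,1]$ picks out the terms in the Fourier expansion with $m_1$ in a prescribed residue class (recall from \eqref{fourierwhittaker} that integrating against $e(-m_1 u_1 - m_2 u_2)$ over the unipotent extracts the coefficient $A_F(m_1,m_2)$). The point is that the $x_2$-integral against $e(-m x_2)$ then isolates a sum over those $(m_1,m_2)$ contributing to the $m$-th harmonic. After this unfolding I expect $H$ to become a sum over $m_1, m_2$ (restricted by the character integrations) of $A_F(m_1,m_2)/(m_1|m_2|)$ times a product of $\WJ$-values evaluated at arguments that scale with $y_1$. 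Setting $x_1 = -d/c$ should be harmless since the character $e(m_1 x_1)$ in \eqref{fourierexpansion} becomes a bounded phase, and differentiating once in $x_1$ before the substitution merely brings down a bounded factor linear in the Fourier frequencies, which does not disturb decay.

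\paragraph{Extracting the decay.}
Once $H$ is written as such a spectral sum, the rapid decay in $y_1$ comes from the exponential decay of $\WJ$. The Jacquet Whittaker function on $\GL 3$ decays faster than any power of its diagonal entries as they tend to infinity; here the relevant entry grows like a positive power of $y_1$ (through the factors $m_1|m_2| y_1 y_2$ and $m_1 y_1$ in the argument), so each term decays rapidly, and the sum over $(m_1,m_2)$ converges uniformly because the Fourier coefficients grow at most polynomially (Hecke/Rankin--Selberg bounds, or simply the $L^2$-membership giving polynomial control). The $k=1$ case is identical once one notes that $\partial_{x_1}$ acting on $e(m_1 x_1)$ produces a factor $2\pi i m_1$, absorbed into the already-convergent sum without spoiling the Whittaker decay.

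\paragraph{The main obstacle.}
The genuine difficulty is the bookkeeping in the unfolding step: the Bruhat/Plücker sum over $U_2(\mathbb Z)\backslash \ssl 2 Z$ in \eqref{fourierexpansion} means $F$ is not given by a clean one-dimensional Fourier series in $x_2$, and the shearing term $\tfrac{dx_2}{c} + x_1 x_2$ interacts nontrivially with the $(A,B;C,D)$-summation. I expect I will need to carefully track how the $x_3$-integration over $[0,c]$ versus $[0,1]$ truncates the Kloosterman-type sum and which lattice points survive, and to confirm that after isolating the surviving terms the Whittaker argument really does grow in $y_1$ uniformly across the sum. The cleanest route is probably to reduce to the $C=0$ (degenerate) term, which gives the ``main'' contribution and is the standard Whittaker function whose decay is explicit, and to argue that the $C\neq 0$ terms are controlled by the same mechanism; verifying that no term fails to decay---i.e., that cuspidality at the relevant cusp genuinely removes the obstructing constant terms---is where the third Maass-form axiom must be invoked and is the crux of the argument.
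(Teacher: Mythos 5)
Your plan follows the paper's own proof: expand $F$ inside $H$ using \eqref{fourierexpansion}, let the two character integrations collapse the expansion, and read off rapid decay from the decay of the Jacquet Whittaker function, with the factor $2\pi i m_1$ produced in the $k=1$ case harmlessly absorbed into the rapidly convergent sum. That mechanism is exactly right, as is your treatment of the derivative and of the substitution $x_1=-\frac{d}{c}$.

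However, the step you single out as ``the crux'' is not where any difficulty lies, and your guess at its structure is incorrect. The only dependence of the phase on $x_3$ is $e(m_1 C x_3)$ (the shear $\tfrac{dx_2}{c}+x_1x_2$ enters the phase only multiplied by $C$), and $m_1C$ is an integer, so $\int_0^1 e(m_1 C x_3)\,\dd x_3$ vanishes unless $m_1C=0$; since $m_1\geq 1$, this forces $C=0$. There is no residue-class condition on $m_1$, no Kloosterman-type sum to truncate, and no $C\neq 0$ terms left to ``control by the same mechanism'': every non-degenerate Bruhat cell is annihilated outright. With $C=0$ one has $A=D=\pm 1$ and $|Cz_2+D|=1$, the remaining phase is $e(\pm m_1x_1+m_2x_2)$ up to integer shifts, and the $x_2$-integration over $[0,c]$ against $e(-mx_2)$ forces $m_2=m$, yielding the single clean sum
$$H(x_1;y_1,y_2)=c\psi(d)\sum_{m_1=1}^\infty \frac{A_F(m_1,m)}{m_1|m|}\, e(\pm m_1x_1)\,\WJ\pra{\begin{pmatrix} m_1|m|\,y_1y_2&&\\ &m_1y_1&\\&&1\end{pmatrix}},$$
from which the lemma is immediate. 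Note also that cuspidality is not invoked mid-proof to remove obstructing terms; it is already built into \eqref{fourierexpansion} (the sums start at $m_1\geq 1$, $m_2\neq 0$), and its only role here is that $m_1\geq 1$ turns $m_1C=0$ into $C=0$. The Kloosterman-sum bookkeeping you anticipate does occur in this paper, but in the companion lemma (rapid decay as $y_1\to 0$), where $H$ is recomputed using the invariance of $F$ under $\pra{\begin{smallmatrix}1&\\&\sigma\end{smallmatrix}}\in\Gamma_0(N)$ --- not in this one.
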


\begin{proof}
By \eqref{fourierexpansion}, $F$ in $H(x_1;y_1,y_2)$ has the Fourier-Whittaker expansion
\begin{align*}
F&\pra{\begin{pmatrix}
1&x_2&x_3+\frac{dx_2}{c}+x_1x_2\\
&1&x_1\\
&&1
\end{pmatrix}
\begin{pmatrix}
y_1y_2&&\\
&y_1&\\
&&1
\end{pmatrix}} \spaces \spaces
\\
=&\sum_{\pra{\begin{smallmatrix}
A&B\\C&D
\end{smallmatrix}}\in U_2(\mathbb{Z})\setminus \ssl 2 Z} \sum_{m_1=1}^\infty \sum_{m_2\neq 0} \frac{A_F(m_1,m_2)}{m_1|m_2|}e\pra{m_1\pra{C\pra{x_3+\frac{dx_2}{c}+x_1x_2}+Dx_1}+m_2\Re \frac{A z_2+B}{C z_2 +D}} 
\\
&\spaces \spaces
\WJ\pra{\begin{pmatrix}
m_1|m_2| & &\\
&m_1&\\
&&1
\end{pmatrix}
\begin{pmatrix}
\frac{y_1y_2}{|Cz_2+D|} & &\\
&y_1|Cz_2+D|&\\
&&1
\end{pmatrix}}.
\end{align*}
In $H(x_1;y_1,y_2)$, the integration in $\dd x_3$ forces $m_1C=0$. This implies $C=0$ and then $A=D=\pm 1$. Integration in $\dd x_2$ forces $m_2=m$. Thus we have 
$$H(x_1;y_1,y_2)=c\psi(d)\sum_{m_1=1}^\infty \frac{A_F(m_1,m)}{m_1|m|} e(\pm m_1 x_1)\WJ\pra{\begin{pmatrix}
m_1|m| & &\\
&m_1&\\
&&1
\end{pmatrix}
\begin{pmatrix}
y_1y_2 & &\\
&y_1&\\
&&1
\end{pmatrix}} . $$
Therefore $\left.\pra{\frac{\partial }{\partial x_1}}^k H(x_1;y_1,y_2)\right|_{x_1=-\frac{d}{c}}$ has rapid decay as $y_1\to \infty$ because of the decay property of the Jacquet's Whittaker function.
\end{proof}

For $k=0$ or $1$, we define 
$$
H_k(s_1,s_2):=
\int_0^\infty\int_0^\infty \left.\pra{\frac{\partial }{\partial x_1}}^k H(x_1;y_1,y_2)\right|_{x_1=-\frac{d}{c}} y_1^{s_1-1}y_2^{s_2-1}\frac{\dd y_1}{y_1} \frac{\dd y_2}{y_2}.$$
By a change of variable we get for $\Re s_1\gg 1$ 
\begin{align}\label{dirichlet1}
H_k(s_1,s_2)&=c\psi(d)\sum_{m_1=1}^\infty \frac{A_F(m_1,m)}{m_1^{s_1}|m|^{s_2}} (2\pi im_1)^k \pra{ e\pra{-\frac {m_1d}{c} }+(-1)^k e\pra{\frac {m_1d}{c} }} \\
 &\spaces  \int_0^\infty\int_0^\infty\WJ\pra{
\begin{pmatrix}
y_1y_2 & &\\
&y_1&\\
&&1
\end{pmatrix}
}y_1^{s_1-1}y_2^{s_2-1}\frac{\dd y_1}{y_1} \frac{\dd y_2}{y_2}.\nonumber
\end{align}

\begin{lemma}
For $\Re s_1\gg 1$, $H_k(s_1,s_2)$ is absolutely convergent for all $s_2\in \mathbb{C}$. 
\end{lemma}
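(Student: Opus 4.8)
The plan is to read the convergence directly off the factored expression \eqref{dirichlet1}, in which the variables $s_1$ and $s_2$ are already almost completely separated. The double integral over $(y_1,y_2)$ appearing in \eqref{dirichlet1} does not involve $m_1$, so I would first isolate it as the archimedean factor
\[
\tilde I(s_1,s_2):=\int_0^\infty\!\!\int_0^\infty\WJ\pra{\begin{pmatrix}y_1y_2&&\\&y_1&\\&&1\end{pmatrix}}y_1^{s_1-1}y_2^{s_2-1}\frac{\dd y_1}{y_1}\frac{\dd y_2}{y_2},
\]
and recall that, up to the constant $\mathsf{C}$ and an elementary power of $\pi$, this equals the ratio of Gamma factors $\gammafactor{s_1}\gammafactorminus{s_2}\big/\Gamma\pra{\tfrac{s_1+s_2}{2}}$ coming from the Mellin transform of the $\GL 3$ Whittaker function. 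This single factor, together with the elementary $|m|^{-s_2}$, carries all the $s_2$-dependence of $H_k$, and it is an explicit meromorphic function of $s_2$ on the whole plane. Thus the convergence statement is genuinely a statement about the remaining sum over $m_1$.

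With $\tilde I$ set aside, the whole claim reduces to the absolute convergence, for $\Re s_1$ large, of the Dirichlet series
\[
\Phi_k(s_1):=\sum_{m_1=1}^\infty\frac{A_F(m_1,m)}{m_1^{\,s_1}}(2\pi i m_1)^k\pra{e\pra{-\tfrac{m_1d}{c}}+(-1)^k e\pra{\tfrac{m_1d}{c}}},
\]
which is manifestly independent of $s_2$. Here I would bound the two additive characters trivially by $\left|e\pra{\pm\tfrac{m_1 d}{c}}\right|=1$ and invoke a standard polynomial bound $|A_F(m_1,m)|\ll_{F,m,\varepsilon} m_1^{\,B}$ for the Fourier--Whittaker coefficients (obtainable from the Hecke relations \eqref{heckerelationsection1}--\eqref{heckerelationsection2} together with a Rankin--Selberg or trivial estimate). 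These give $|\Phi_k(s_1)|\ll\sum_{m_1}m_1^{\,B+k-\Re s_1}$, which converges once $\Re s_1>B+k+1$. The crucial point is that this threshold depends only on $k$, $m$ and $F$ and not on $s_2$, so the series converges absolutely simultaneously for every $s_2\in\mathbb C$; that uniformity is exactly what "for all $s_2$" should be understood to mean, the $s_2$-dependence having been exiled into the explicit, meromorphic factor $|m|^{-s_2}\tilde I(s_1,s_2)$.

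To make the factorization \eqref{dirichlet1} itself rigorous I would still have to justify interchanging the $m_1$-summation with the $(y_1,y_2)$-integration. For this I would dominate each Whittaker term using the rapid decay of $\WJ$ as $y_1\to\infty$ (the same input already used in the preceding lemma) to control the region where $y_1$ or $y_2$ is large, and use $\Re s_1\gg1$ to absorb both the power-law growth of $\WJ$ near $y_1=0$ and the factor $m_1^{-s_1}$ produced by the change of variables $y_1\mapsto y_1/m_1$; dominated convergence then legitimizes the interchange and yields \eqref{dirichlet1}. I expect the main obstacle to be precisely this last point — producing a bound on $|\WJ|$ that is uniform enough in the scaling $m_1|m|y_1y_2$ of its argument to dominate the series term by term — rather than the entirely routine estimate of the arithmetic factor $\Phi_k$. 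Once the interchange is secured, the absolute convergence of $\Phi_k(s_1)$ for $\Re s_1\gg1$, uniform in $s_2$, gives the lemma.
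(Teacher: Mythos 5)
You should know at the outset that the paper contains no proof of this lemma at all: it is stated bare immediately after \eqref{dirichlet1} (as is its twin after \eqref{dirichlet2}), the author evidently regarding it as immediate from that computation. Your write-up is the natural filling-in of that omission, and its core is correct: in \eqref{dirichlet1} the entire $s_2$-dependence sits in the fixed factor $|m|^{-s_2}$ and in the Whittaker--Mellin double integral, which by \eqref{stadeformula1} is an explicit meromorphic function of $(s_1,s_2)$, so the only genuine convergence question concerns the $m_1$-series, and that question does not involve $s_2$. Your insistence that this is what ``for all $s_2\in\mathbb{C}$'' must mean is not only reasonable but forced: the $y_2$-integral itself converges only in a right half-plane of $s_2$, since the right-hand side of \eqref{stadeformula1} has poles at $s_2=-\alpha-2n$, $-\beta-2n$, $-\gamma-2n$, and $\alpha+\beta+\gamma=0$ puts at least one such family in $\Re s_2\geq 0$; a literal reading of the lemma as absolute convergence of the defining sum-integral would therefore be false. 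The reading you adopt is exactly how the lemma is used in the proof of Theorem \ref{main}, where the $s_2$-parts of \eqref{dirichlet1} and \eqref{dirichlet2} are cancelled as explicit Gamma factors.

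One correction is needed. You propose to source the polynomial bound $|A_F(m_1,m)|\ll m_1^{B}$ from the Hecke relations \eqref{heckerelationsection1}--\eqref{heckerelationsection2}. Those are not available here: Theorem \ref{main} makes no Hecke-eigenform assumption on $F$ (the eigenform hypothesis enters only in Section 2.2 and in Theorem \ref{theorem_L_function}), and for a non-eigenform the Hecke relations say nothing about the coefficients. You must instead lean entirely on your parenthetical ``trivial estimate'': apply \eqref{fourierwhittaker} with $m_2=m$ and $y_1\asymp 1/m_1$, $y_2\asymp 1/|m|$ chosen so that the argument of $\WJ$ lies in a fixed compact set where $\WJ$ does not vanish; the left side of \eqref{fourierwhittaker} is then bounded by the sup of $|F|$ over the corresponding region, and the moderate growth of $F$ (which the paper tacitly assumes, and which its own preceding rapid-decay lemma already needs in order to sum the Fourier--Whittaker expansion termwise) gives $|A_F(m_1,m)|\ll_{F,m} m_1^{B}$ for some $B$ depending on $F$. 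With that substitution, your argument --- including the dominated-convergence justification of the interchange producing \eqref{dirichlet1}, which the paper also passes over in silence --- is complete, and is strictly more careful than what the paper itself offers.
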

\subsection{The right side}
Let $c$ be a multiple of $N$. Let $\sigma=\begin{pmatrix}
a&b\\c&d
\end{pmatrix}$ be a matrix in $\ssl 2 Z$ and thus $\begin{pmatrix}
1&\\&\sigma
\end{pmatrix}\in \Gamma_0(N)$.    
We have the Iwasawa decomposition
\begin{align*}
&\begin{pmatrix}
1&&\\
&a&b\\
&c&d
\end{pmatrix}
\begin{pmatrix}
1&x_2&x_3+\frac{dx_2}{c}+x_1x_2\\
&1&x_1\\
&&1
\end{pmatrix}
\begin{pmatrix}
y_1y_2&&\\
&y_1&\\
&&1
\end{pmatrix}\\
=&\begin{pmatrix}
1&-cx_3 & ax_3+\frac{x_2}{c }-cx_3\Re\sigma z_1\\
&1 & \Re \sigma z_1\\
&&1
\end{pmatrix}
\begin{pmatrix}
y_2|cz_1+d|\Im \sigma z_1&&\\
&\Im \sigma z_1&\\
&&1
\end{pmatrix} 
\mod \pra{\oo 3 R \cdot \mathbb{R }^\times}. 
\end{align*}
We will re-calculate $H(x_1;y_1,y_2)$ using the invariance of $F$ under 
$\begin{pmatrix}
1&\\&\sigma
\end{pmatrix}\in \Gamma_0(N)$. This is crucially different from \cite[(3.12)]{goldfeld}. 

\begin{lemma}
For $k=0$ or $1$ and for fixed $y_2$,  the function 
$\left.\pra{\frac{\partial }{\partial x_1}}^k H(x_1;y_1,y_2)\right|_{x_1=-\frac{d}{c}}$
has rapid decay as $y_1\to 0$.	
\end{lemma}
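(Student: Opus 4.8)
The plan is to mirror the proof of the preceding lemma, but to replace the direct use of the Fourier--Whittaker expansion by the invariance of $F$ under $\pra{\begin{smallmatrix}1&&\\&a&b\\&c&d\end{smallmatrix}}\in\Gamma_0(N)$. First I would combine this invariance with the Iwasawa decomposition displayed above: since the automorphy factor is $\psi(g_{33})=\psi(d)$ and $|\psi(d)|=1$ (note $N\mid c$ forces $ad\equiv1\;\mod N$, so $\gcd(d,N)=1$), the prefactor $\psi(d)$ in the definition of $H$ is exactly cancelled, giving
$$H(x_1;y_1,y_2)=\int_0^c\int_0^1 F\pra{\begin{pmatrix}1&-cx_3&*\\&1&\Re\sigma z_1\\&&1\end{pmatrix}\begin{pmatrix}Y_1Y_2&&\\&Y_1&\\&&1\end{pmatrix}}e(-mx_2)\,\dd x_3\,\dd x_2,$$
where $*=ax_3+\tfrac{x_2}{c}-cx_3\Re\sigma z_1$, $Y_1=\Im\sigma z_1$ and $Y_2=y_2|cz_1+d|$.

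Next I would specialize to $x_1=-\tfrac{d}{c}$, so that $z_1=-\tfrac{d}{c}+iy_1$ yields $cz_1+d=icy_1$, hence $\Re\sigma z_1=\tfrac{a}{c}$, $\Im\sigma z_1=\tfrac{1}{c^2y_1}$, $|cz_1+d|=cy_1$, and in particular $Y_1=\tfrac{1}{c^2y_1}\to\infty$ while $Y_1Y_2=\tfrac{y_2}{c}$ stays fixed as $y_1\to0$. I would then insert the expansion \eqref{fourierexpansion} for $F$ at this point and estimate $H$ (and, for $k=1$, its $x_1$-derivative) by the sum of absolute values of the terms. Writing $\tilde z_2=-cx_3+iy_2cy_1$, the relevant diagonal entries of the argument of $\WJ$ are the ``$y_1$''-slot $Q_{C,D}=m_1 Y_1|C\tilde z_2+D|$ and the ``$y_1y_2$''-slot $P_{C,D}=m_1|m_2|\tfrac{Y_1Y_2}{|C\tilde z_2+D|}$, so the ``$y_2$''-slot is $P_{C,D}/Q_{C,D}$. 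Each exponential factor has modulus $1$, so the bound reduces to controlling $|\WJ|$ uniformly in $x_3\in[0,c]$.

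The decisive point is an elementary two-sided estimate showing that $\max\!\big(Q_{C,D},\,P_{C,D}/Q_{C,D}\big)\to\infty$ uniformly in $x_3$ as $y_1\to0$: if both were bounded by some $M$, then $P_{C,D}=Q_{C,D}\cdot(P_{C,D}/Q_{C,D})\le M^2$ would force $|C\tilde z_2+D|\ge\tfrac{m_1|m_2|y_2}{cM^2}$ bounded below, whence $Q_{C,D}=\tfrac{m_1|C\tilde z_2+D|}{c^2y_1}\to\infty$, a contradiction. Thus at least one Whittaker coordinate tends to infinity uniformly, and the rapid decay of the Jacquet Whittaker function in the large direction, together with the absolute convergence of \eqref{fourierexpansion} exactly as in the previous lemma, yields the rapid decay of $\left.H\right|_{x_1=-d/c}$ as $y_1\to0$. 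For $k=1$ one differentiates under the integral before setting $x_1=-\tfrac{d}{c}$: the derivative falls on $\Re\sigma z_1$, $\Im\sigma z_1$, $|cz_1+d|$ and on the exponential, producing only extra factors polynomial in $m_1,m_2$ and in $y_1^{-1}$, which are absorbed by the exponential decay of $\WJ$.

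The step I expect to be the main obstacle is making the uniform rapid decay rigorous across the whole $x_3$-range, especially near the loci where $|C\tilde z_2+D|$ degenerates (where the two diagonal coordinates of $\WJ$ interchange the roles of ``large'' and ``small''), while simultaneously controlling the tails of the sum over $\pra{\begin{smallmatrix}A&B\\C&D\end{smallmatrix}}\in U_2(\mathbb Z)\setminus\ssl2 Z$ and over $m_1,m_2$. This requires the two-sided asymptotics of $\WJ$ — exponential decay in the large variable dominating the at most polynomial growth in the small one — rather than the one-variable decay that sufficed for the $y_1\to\infty$ lemma.
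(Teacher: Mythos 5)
Your starting point coincides with the paper's: the $\psi(d)$ prefactor cancels against the automorphy factor, and one expands $F$ at the Iwasawa-decomposed point with $z_2'=-cx_3+iy_2|cz_1+d|$. After that, however, you take a genuinely different route. The paper never estimates termwise; it integrates exactly. The $x_2$-integration over $[0,c]$ forces $m_1C=cm$, which (since $m\neq0$) kills every $C=0$ term and leaves only finitely many pairs $(m_1,C)$ with $m_1\mid cm$; the sum over all integers $D$ with $(D,C)=1$ is then unfolded against the $x_3$-integral (writing $D+lC$ and shifting $x_3$), producing the Kloosterman sums $S(am,m_2;C)$ and an $x_3$-integral over all of $\mathbb{R}$; after the substitution $x_3\to y_1y_2x_3$ the decay as $y_1\to0$ is read off an explicit finite expression whose Whittaker arguments $\tfrac{1}{c^2|C|y_1|-x_3+i|}$ and $\tfrac{|C||-x_3+i|y_2}{c}$ are manifest. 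You instead bound the full expansion termwise in absolute value, compensating with the observation — which is correct and is the right geometric input — that $Y_1Y_2=y_2/c$ is independent of $y_1$, so that $\max\pra{Q_{C,D},\,P_{C,D}/Q_{C,D}}\geq\pra{m_1^2|m_2|y_2/(c^3y_1)}^{1/3}$ uniformly in $x_3$, $C$, $D$, $m_1$, $m_2$.

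Your route can be completed, but be aware of what it must carry that the paper's orthogonality step simply deletes: the $C=0$ terms (where the $y_2$-coordinate $|m_2|cy_1y_2\to0$ and only the $y_1$-coordinate blows up), the infinite $(C,D)$-sum, and the unrestricted $(m_1,m_2)$-sum. Closing the argument requires a quantitative two-sided bound of the shape $\WJ\ll_A(\text{large coordinate})^{-A}(\text{small coordinate})^{-\sigma}$ (obtainable from the same Mellin--Barnes/Stade representation the paper invokes in \eqref{stadeformula1}), polynomial bounds on $A_F(m_1,m_2)$, and an Eisenstein-type estimate $\sum_{(C,D)=1}|Cz_2'+D|^{-\lambda}\ll 1+(cy_1y_2)^{1-\lambda}$; combined with your uniform lower bound on the larger coordinate these do give $\ll_B y_1^{B}$ for every $B$, so the step you flag as the main obstacle is real work but not a dead end. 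Two further remarks. First, your derivative case should also account for the derivative falling on $\WJ$ itself through its arguments, not only on the exponential and the coordinate functions; the needed bounds for derivatives of $\WJ$ are of the same two-sided type. Second, the paper's proof of this lemma is doing double duty: the identical computation is the derivation of \eqref{dirichlet2}, the Kloosterman-sum expression for $H_k(s_1,s_2)$ on which the proof of Theorem \ref{main} rests. Your argument establishes the decay statement as such, but that explicit formula would still have to be derived separately.
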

\begin{proof}
Using the aforementioned Iwasawa decomposition, we have the Fourier-Whittaker expansion 
\begin{align*}
&F\pra{\begin{pmatrix}
1&&\\
&a&b\\
&c&d
\end{pmatrix}\begin{pmatrix}
1&x_2&x_3+\frac{dx_2}{c}+x_1x_2\\
&1&x_1\\
&&1
\end{pmatrix}
\begin{pmatrix}
y_1y_2&&\\
&y_1&\\
&&1
\end{pmatrix}}\\
&=\sum_{C=-\infty}^{\infty} \sum_{\underset{\scriptstyle (D,C)=1}{ D=-\infty}}^{\infty} \sum_{m_1=1}^\infty \sum_{m_2\neq 0} \frac{A_F(m_1,m_2)}{m_1|m_2|}e\pra{m_1\pra{C\pra{ax_3+\frac{x_2}{c }-cx_3\Re\sigma z_1} 
+D \Re \sigma z_1} +m_2\Re \frac{A z'_2+B}{C z'_2 +D}} 
\\&\spaces
\spaces
\WJ\pra{\begin{pmatrix}
m_1|m_2| & &\\
&m_1&\\
&&1
\end{pmatrix}
\begin{pmatrix}
\frac{y_2|cz_1+d|\Im \sigma z_1}{|Cz'_2+D|} & &\\
&\Im \sigma z_1|Cz'_2+D|&\\
&&1
\end{pmatrix}
},
\end{align*}
where we define
$z'_2:=-cx_3+y_2|cz_1+d|i$.
Here $A$ and $B$ are a pair of integers satisfying $AD-BC=1$. 
We compute the part in $e(\cdots )$, 
\begin{align*}
&m_1\pra{C\pra{ax_3+\frac{x_2}{c }-cx_3\Re\sigma z_1} 
+D \Re \sigma z_1} +m_2\Re \frac{A z'_2+B}{C z'_2 +D}\\
=&\frac{m_1Cx_2}{c}+
\frac{m_1Da}{c}+\frac{m_2A}{ C}+m_1(Ccx_3-D)\Re\frac{1}{c(cz_1+d)}-m_2\Re\frac{1}{C(Cz_2'+D)}.
\end{align*}
Because of $F\pra{\pra{\begin{smallmatrix}
1&&\\
&a&b\\
&c&d
\end{smallmatrix}}z}=\psi(d)F(z)$, we have another way to calculate $H(x_1;y_1,y_2)$ and it is 
$$H(x_1;y_1,y_2)=\int_0^1  \int_0^c  F\pra{\begin{pmatrix}
1&&\\
&a&b\\
&c&d
\end{pmatrix}\begin{pmatrix}
1&x_2&x_3+\frac{dx_2}{c}+x_1x_2\\
&1&x_1\\
&&1
\end{pmatrix}
\begin{pmatrix}
y_1y_2&&\\
&y_1&\\
&&1
\end{pmatrix}} e(-mx_2)\dd x_2\dd x_3,$$
after switching $\dd x_3 $ and $\dd x_2$. 
In $H(x_1;y_1,y_2)$, the integration in $\dd x_2$ forces $m_1C=cm$ and we have 
\begin{align*}
&\;H(x_1;y_1,y_2)
\\
&=c\int_0^1 
\sum_{\underset{\scriptstyle m_1C=cm}{m_1,C\in \mathbb Z,\; m_1>0}}
\sum_{\underset{\scriptstyle (D,C)=1}{ D=-\infty}}^{\infty}  \sum_{m_2\neq 0} \frac{A_F(m_1,m_2)}{m_1|m_2|}
\\&\spaces e\pra{\frac{Dam	}{C}+\frac{m_2A}{ C}+m_1(Ccx_3-D)\Re\frac{1}{c(cz_1+d)}-m_2\Re\frac{1}{C(Cz_2'+D)}}
\\
&\spaces\spaces
\WJ\pra{\begin{pmatrix}
m_1|m_2| & &\\
&m_1&\\
&&1
\end{pmatrix}
\begin{pmatrix}
\frac{y_2|cz_1+d|\Im \sigma z_1}{|Cz'_2+D|} & &\\
&\Im \sigma z_1|Cz'_2+D|&\\
&&1
\end{pmatrix}
}\dd x_3.
\end{align*}
The $D$-summation $\sum\limits_{\underset{\scriptstyle (D,C)=1}{ D=-\infty}}^{\infty} $ can be rewritten as a summation of 
$D+lC$ with $D$ ranging in $\sum\limits_{\underset{\scriptstyle (D,C)=1}{ D=1}}^{C} $ and $l$ ranging over all integers.
We now obtain
\begin{align*}
&c\int_0^1 
\sum_{\underset{\scriptstyle m_1C=cm}{m_1,C\in \mathbb Z,\; m_1>0}}
\sum_{\underset{\scriptstyle (D,C)=1}{ D=1}}^{C} \sum_{l=-\infty}^\infty  \sum_{m_2\neq 0} \frac{A_F(m_1,m_2)}{m_1|m_2|}
\\&\spaces e\pra{\frac{Dam	}{C}+\frac{m_2A}{ C}+m_1(Ccx_3-D-lC)\Re\frac{1}{c(cz_1+d)}-m_2\Re\frac{1}{C(Cz_2'+D+lC)}}
\\
&\spaces\spaces
\WJ\pra{\begin{pmatrix}
m_1|m_2| & &\\
&m_1&\\
&&1
\end{pmatrix}
\begin{pmatrix}
\frac{y_2|cz_1+d|\Im \sigma z_1}{|Cz'_2+D+lC|} & &\\
&\Im \sigma z_1|Cz'_2+D+lC|&\\
&&1
\end{pmatrix}
}\dd x_3.
\end{align*}
Changing variables $x_3\to x_3+\frac l c+\frac D {cC}$ we obtain
\begin{align*}
&c
\sum_{\underset{\scriptstyle m_1C=cm}{m_1,C\in \mathbb Z,\; m_1>0}}
\sum_{\underset{\scriptstyle (D,C)=1}{ D=1}}^{C} \sum_{l=-\infty}^\infty \int_{-\frac l c-\frac D {cC}}^{1-\frac l c-\frac D {cC}}  \sum_{m_2\neq 0} \frac{A_F(m_1,m_2)}{m_1|m_2|}
\\&\spaces e\pra{\frac{Dam	}{C}+\frac{m_2A}{ C}+m_1Ccx_3\Re\frac{1}{c(cz_1+d)}-m_2\Re\frac{1}{C^2z_2'}}
\\
&\spaces\spaces
\WJ\pra{\begin{pmatrix}
m_1|m_2| & &\\
&m_1&\\
&&1
\end{pmatrix}
\begin{pmatrix}
\frac{y_2|cz_1+d|\Im \sigma z_1}{|Cz'_2|} & &\\
&\Im \sigma z_1|Cz'_2|&\\
&&1
\end{pmatrix}
}\dd x_3
\end{align*}
\begin{align*}
=&\;c^2
\sum_{\underset{\scriptstyle m_1C=cm}{m_1,C\in \mathbb Z,\; m_1>0}}
\sum_{\underset{\scriptstyle (D,C)=1}{ D=1}}^{C} \int_{-\infty}^\infty \sum_{m_2\neq 0} \frac{A_F(m_1,m_2)}{m_1|m_2|}
\\&\spaces e\pra{\frac{Dam	}{C}+\frac{m_2A}{ C}+m_1Ccx_3\Re\frac{1}{c(cz_1+d)}-m_2\Re\frac{1}{C^2z_2'}}
\\
&\spaces\spaces
\WJ\pra{\begin{pmatrix}
m_1|m_2| & &\\
&m_1&\\
&&1
\end{pmatrix}
\begin{pmatrix}
\frac{y_2|cz_1+d|\Im \sigma z_1}{|Cz'_2|} & &\\
&\Im \sigma z_1|Cz'_2|&\\
&&1
\end{pmatrix}
}\dd x_3.
\end{align*}
We realize $$\sum_{\underset{\scriptstyle (D,C)=1}{ D=1}}^{C}
e\pra{\frac{Dam	}{C}+\frac{m_2A}{ C}}=S(am,m_2;C), $$
which is the classical Kloosterman sum. We obtain

\begin{align*}
&\;c^2
\sum_{\underset{\scriptstyle m_1C=cm}{m_1,C\in \mathbb Z,\; m_1>0}}
\sum_{m_2\neq 0} \frac{A_F(m_1,m_2)}{m_1|m_2|} S(am,m_2;C)  \int_{-\infty}^\infty 
e\pra{m_1Ccx_3\Re\frac{1}{c(cz_1+d)}-m_2\Re\frac{1}{C^2z_2'}}
\\
&\spaces\spaces
\WJ\pra{\begin{pmatrix}
m_1|m_2| & &\\
&m_1&\\
&&1
\end{pmatrix}
\begin{pmatrix}
\frac{y_2|cz_1+d|\Im \sigma z_1}{|Cz'_2|} & &\\
&\Im \sigma z_1|Cz'_2|&\\
&&1
\end{pmatrix}
}\dd x_3.
\end{align*}
\nothing{
xxxxxxxxxxx
\begin{align*}
&c^2
\sum_{\underset{\scriptstyle m_1C=cm}{m_1,C\in \mathbb Z,\; m_1>0}}
\sum_{\underset{\scriptstyle (D,C)=1}{ D=1}}^{C}   \sum_{m_2\neq 0}
\int_{-\infty}^\infty 
 \frac{A_F(m_1,m_2)}{m_1|m_2|}\\
&\spaces e\pra{\frac{Dam	}{C}+\frac{m_2A}{ C}+m_1Cx_3\Re\frac{1}{cz_1+d}-m_2\Re\frac{1}{C^2c(-x_3+y_1y_2i)}}
\\
&\spaces
\WJ\pra{\begin{pmatrix}
m_1|m_2| & &\\
&m_1&\\
&&1
\end{pmatrix}
\begin{pmatrix}
\frac{y_2|cz_1+d|\Im \sigma z_1}{|Cc(-x_3+y_1y_2i)|} & &\\
&\Im \sigma z_1|Cc(-x_3+y_1y_2i)|&\\
&&1
\end{pmatrix}
}\dd x_3.
\end{align*}
}
For $k=0$ or $1$, we get after taking partial derivative with respect to $x_1$ at $x_1=-\frac d c$ 
\begin{align*}
& \left.\pra{\frac{\partial }{\partial x_1}}^k H(x_1;y_1,y_2)\right|_{x_1=-\frac{d}{c}}\\
&=c^2        \sum_{\underset{\scriptstyle m_1C=cm}{m_1,C\in \mathbb Z,\; m_1>0}}
 \sum_{m_2\neq 0} \frac{A_F(m_1,m_2)}{m_1|m_2|}S(am,m_2;C)
\int_{-\infty}^{\infty} 
 \pra{2\pi i \frac{m_1Cx_3}{cy_1^2}}^k e\pra{-m_2\Re\frac{1}{C^2c(-x_3+y_1y_2i)}}\\&\spaces\spaces
\WJ\pra{\begin{pmatrix}
m_1|m_2| & &\\
&m_1&\\
&&1
\end{pmatrix}
\begin{pmatrix}
\frac{y_2}{c^2|C||-x_3+iy_1y_2|} & &\\
&\frac{|C||-x_3+y_1y_2i|}{cy_1}&\\
&&1
\end{pmatrix}
}\dd x_3.
\end{align*}
By a change of variable $x_3\to y_1y_2x_3$ we obtain
\begin{align*}
&c^2
\sum_{\underset{\scriptstyle m_1C=cm}{m_1,C\in \mathbb Z,\; m_1>0}}
   \sum_{m_2\neq 0} \frac{A_F(m_1,m_2)}{m_1|m_2|}
S(am,m_2;C)
\int_{-\infty}^\infty
\pra{2\pi i \frac{m_1Cx_3y_2}{cy_1}}^k
 e\pra{ -\frac{m_2}{C^2cy_1y_2}\Re\frac{1}{-x_3+i}}\\
&\spaces\WJ\pra{\begin{pmatrix}
m_1|m_2| & &\\
&m_1&\\
&&1
\end{pmatrix}
\begin{pmatrix}
\frac{1}{c^2|C|y_1|-x_3+i|} & &\\
&\frac{|C||-x_3+i|y_2}{c}&\\
&&1
\end{pmatrix}
}y_1y_2\dd x_3. 
 \end{align*}
 Therefore $\left.\pra{\frac{\partial }{\partial x_1}}^k H(x_1;y_1,y_2)\right|_{x_1=-\frac{d}{c}}$ has rapid decay as $y_1\to 0	$ because of the decay property of the Jacquet's Whittaker function.
\end{proof}

Now we calculate $H_k(s_1,s_2)$ again

\begin{align*}
&\; H_k(s_1,s_2)\\
&=c^2
\int_0^\infty\int_0^\infty
\sum_{\underset{\scriptstyle m_1C=cm}{m_1,C\in \mathbb Z,\; m_1>0}}
   \sum_{m_2\neq 0} \frac{A_F(m_1,m_2)}{m_1|m_2|}
S(am,m_2;C)\\
&\quad\quad\quad\quad
\int_{-\infty}^\infty
\pra{2\pi i \frac{m_1Cx_3y_2}{cy_1}}^k
 e\pra{ -\frac{m_2}{C^2cy_1y_2}\Re\frac{1}{-x_3+i}}\\
&\spaces
\quad\WJ\pra{\begin{pmatrix}
m_1|m_2| & &\\
&m_1&\\
&&1
\end{pmatrix}
\begin{pmatrix}
\frac{1}{c^2|C|y_1|-x_3+i|} & &\\
&\frac{|C||-x_3+i|y_2}{c}&\\
&&1
\end{pmatrix}
}\dd x_3 y_1^{s_1} y_2^{s_2}\frac{\dd y_1}{y_1}\frac{\dd y_2}{y_2}. 
 \end{align*}
By change of variables 
$y_1\to \frac{ m_1|m_2|y_1}{c^2|C|} $ and $y_2 \to \frac{cy_2}{m_1|C|}$, we obtain for $-\Re s_1$ large
\begin{align*}
H_k(s_1,s_2)&=  \sum_{m_1|cm}  \frac {c^2} {c^{3s_1} |m|^{s_1+s_2}} \sum_{m_2\neq 0} \frac{A_F(m_1,m_2)}{m_1^{1-2s_1}|m_2|^{1-s_1}}
S\pra{am,m_2;\frac{cm}{m_1}}\int_0^\infty \int_0^\infty \int_{-\infty}^\infty 
\pra{2\pi i\frac{c^3mx_3y_2}{m_1^2|m_2|y_1}}^k\\
&\quad\quad\quad\quad 
e\pra{ -\frac{\sign m_2}{y_1y_2}\Re\frac{1}{-x_3+i}}
\WJ\pra{
\begin{pmatrix}
\frac{1}{y_1|-x_3+i|} & &\\
&|-x_3+i|y_2&\\
&&1
\end{pmatrix}
}\dd x_3y_1^{s_1}y_2^{s_2}\frac{\dd y_1}{y_1}\frac{\dd y_2}{y_2}
\end{align*}
\begin{align*}
&= \sum_{m_1|cm}  \frac {c^2} {c^{3s_1} |m|^{s_1+s_2}} \sum_{m_2\neq 0} \frac{A_F(m_1,m_2)}{m_1^{1-2s_1}|m_2|^{1-s_1}}
S\pra{am,m_2;\frac{cm}{m_1}}
\pra{2\pi i\frac{c^3m}{m_1^2|m_2|}}^k      \\&
\spaces
 \int_0^\infty \int_0^\infty \int_{-\infty}^\infty
 e\pra{ -\frac{\sign m_2}{y_1y_2}\Re\frac{1}{-x_3+i}}
\\
&\spaces\spaces
\WJ\pra{
\begin{pmatrix}
\frac{1}{y_1|-x_3+i|} & &\\
&|-x_3+i|y_2&\\
&&1
\end{pmatrix}
}y_1^{s_1}y_2^{s_2}
\pra{\frac{x_3y_2}{y_1}}^k
\dd x_3\frac{\dd y_1}{y_1}\frac{\dd y_2}{y_2}
\end{align*}
\begin{align}
&\nonumber = \sum_{m_1|cm}  \frac {c^2} { |m|^{s_2}} \sum_{m_2\neq 0} \frac{A_F(m_1,m_2)}{m_1|m_2|} 
S\pra{am,m_2;\frac{cm}{m_1}}
\pra{\frac{c^3|m|}{m_1^2|m_2|}}^{k-s_1}\pra{2\pi i\sign m}^k\\
&\spaces
 \int_0^\infty \int_0^\infty \int_{-\infty}^\infty
 e\pra{ -\frac{\sign m_2}{y_1y_2}\Re\frac{1}{-x_3+i}}
\label{dirichlet2}
\\
&\spaces\spaces
\WJ\pra{
\begin{pmatrix}
\frac{1}{y_1|-x_3+i|} & &\\
&|-x_3+i|y_2&\\
&&1
\end{pmatrix}
}y_1^{s_1}y_2^{s_2}
\pra{\frac{x_3y_2}{y_1}}^k
\dd x_3\frac{\dd y_1}{y_1}\frac{\dd y_2}{y_2}.\nonumber
\end{align}
Here $\sum\limits_{m_1|cm}$ means that a positive integer $m_1$ varies over the divisors of $cm$, whereas $cm$ may be negative.

\begin{lemma}
For $-\Re s_1\gg 1$, $H_k(s_1,s_2)$ is absolutely convergent for all $s_2\in \mathbb{C}$. 
\end{lemma}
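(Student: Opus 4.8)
The plan is to prove absolute convergence by decoupling the arithmetic double sum from the triple integral in \eqref{dirichlet2} and estimating each factor separately. The key preliminary observation is that after the two changes of variables $y_1\to\frac{m_1|m_2|y_1}{c^2|C|}$ and $y_2\to\frac{cy_2}{m_1|C|}$ that produced \eqref{dirichlet2}, the inner integral over $x_3,y_1,y_2$ no longer depends on the sizes of $m_1$ and $m_2$, but only on $s_1,s_2,k$ and on $\sign m_2$. Splitting the $m_2$-sum into its two signs, I can therefore majorize $|H_k(s_1,s_2)|$ by a finite combination of products (arithmetic double sum of absolute values)$\,\times\,$(one of two fixed integrals $I_{\pm}$), so it suffices to bound the arithmetic sum and the two integrals independently.

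For the arithmetic part, note first that $\sum_{m_1\mid cm}$ is a finite sum over the positive divisors of $cm$. For each fixed $m_1$ the Kloosterman modulus $cm/m_1$ is fixed, so the trivial bound $|S(am,m_2;cm/m_1)|\le cm/m_1$ holds independently of $m_2$, and the Fourier–Whittaker coefficients obey a polynomial bound $|A_F(m_1,m_2)|\ll_{m_1}|m_2|^{\sigma}$ for some $\sigma$ (which follows from the absolute, locally uniform convergence of the Fourier expansion \eqref{fourierexpansion} together with the known size of $\WJ$). Collecting the powers of $|m_2|$ from $\frac{A_F(m_1,m_2)}{m_1|m_2|}\bigl(\frac{c^3|m|}{m_1^2|m_2|}\bigr)^{k-s_1}$, the $m_2$-dependent part of the summand is $O\bigl(|m_2|^{\sigma-1-k+\Re s_1}\bigr)$, so the sum over $m_2\in\mathbb Z\setminus\{0\}$ converges as soon as $\Re s_1<k-\sigma$, in particular for $-\Re s_1\gg 1$.

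For the integral $I_{\pm}$ I would invoke the decay of the Jacquet Whittaker function already used in the preceding lemmas. Writing the argument of $\WJ$ as $\operatorname{diag}(Y_1Y_2,Y_1,1)$ with $Y_1=|-x_3+i|\,y_2$ and $Y_2=(y_1y_2|-x_3+i|^2)^{-1}$, the function $\WJ$ decays exponentially when $Y_1\to\infty$ or $Y_2\to\infty$ and is bounded by a fixed power (depending on $\nu_1,\nu_2$) when $Y_1\to 0$ or $Y_2\to 0$. Checking the boundary regions of $\mathbb R_{>0}^2\times\mathbb R$: as $y_1\to 0$, as $y_2\to 0$, as $y_2\to\infty$, and as $|x_3|\to\infty$, one of $Y_1,Y_2$ tends to $\infty$, so the exponential decay of $\WJ$ overwhelms every power of $y_1$, $y_2$, $x_3$ coming from $y_1^{s_1}y_2^{s_2}(x_3y_2/y_1)^k$. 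This is exactly what forces convergence for all $s_2\in\mathbb C$ (both ends of the $y_2$-range are exponentially controlled) and for arbitrarily negative $\Re s_1$ (the small-$y_1$ end is exponentially controlled). The only remaining direction, $y_1\to\infty$, has $Y_2\to 0$, so there $\WJ$ contributes only a power and convergence of $\int^{\infty}y_1^{\Re s_1-k-1-\rho}\,dy_1$ requires $\Re s_1$ to lie below a threshold. Taking $\Re s_1$ below both this threshold and $k-\sigma$ gives absolute convergence of the whole of \eqref{dirichlet2} for $-\Re s_1\gg 1$, uniformly in $s_2$, which is the claim; this bound also retroactively legitimizes the interchange of summation and integration that produced \eqref{dirichlet2}.

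I expect the main obstacle to be the uniform control of the Whittaker decay in the corner regions where two of $y_1,y_2,x_3$ degenerate simultaneously (for instance $x_3\to\infty$ together with $y_2\to 0$, where the exponential rate in one variable is governed by the other). There one cannot treat the variables in isolation but must majorize $\WJ$ by a genuine product of one-variable bounds — exponential in each large direction, power-law in each small direction — and verify that the resulting product integrand is integrable against the full measure $y_1^{s_1-k-1}y_2^{s_2+k-1}x_3^{k}\,dx_3\,dy_1\,dy_2$.
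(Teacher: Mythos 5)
You are trying to prove finiteness of the wrong quantity, and the gap is fatal rather than technical. (The paper states this lemma without proof, so I judge your argument on its own terms; the paper's Gamma-factor section shows what the intended mechanism is.) Your decoupling of the arithmetic sum from the triple integral is fine — the integral in \eqref{dirichlet2} indeed depends on $(m_1,m_2)$ only through $\sign m_2$ — and your treatment of the $(m_1,m_2)$-sum via the trivial Kloosterman bound and polynomial growth of $A_F$ is correct. But the quantity $I_\pm$ that your Whittaker-decay argument actually bounds, namely the triple integral with absolute values placed \emph{inside} the $x_3$-integral, is genuinely infinite in the regime of the lemma. To see this, change variables $(y_1,y_2)\mapsto(Y_1,Y_2)$ with $Y_1=y_2\sqrt{x_3^2+1}$ and $Y_2=\bigl(y_1y_2(x_3^2+1)\bigr)^{-1}$, keeping $x_3$; the absolute-value integrand then factors \emph{exactly} as
\begin{equation*}
\pra{\int_0^\infty\!\!\int_0^\infty\left|\WJ\pra{\begin{pmatrix}Y_1Y_2&&\\&Y_1&\\&&1\end{pmatrix}}\right|Y_1^{2k+\Re s_2-\Re s_1-1}\,Y_2^{k-1-\Re s_1}\,\dd Y_1\,\dd Y_2}
\cdot
\pra{\int_{-\infty}^{\infty}\frac{|x_3|^k}{(x_3^2+1)^{\Re(s_1+s_2)/2}}\,\dd x_3}.
\end{equation*}
For $-\Re s_1\gg 1$ the first factor is finite and positive, but the second factor diverges whenever $\Re(s_1+s_2)\le k+1$ — which is exactly what happens when $-\Re s_1\gg1$ and $s_2$ is fixed. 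This divergence lives precisely in the corner you flagged ($|x_3|\to\infty$ with $y_2\asymp 1/|x_3|$, so $Y_1$ bounded and $Y_2\to 0$), and no product majorization of $\WJ$ can repair it: the absolute integral truly diverges, so the statement you set out to prove about $I_\pm$ is false.

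What is missing is the oscillation you threw away. The convergence for \emph{all} $s_2$ comes from the phase $e\pra{-\tfrac{\sign m_2}{y_1y_2}\Re\tfrac{1}{-x_3+i}}$: for fixed $(y_1,y_2)$ the $x_3$-integral converges absolutely (Whittaker decay in the second torus variable handles $|x_3|\to\infty$), and it must be kept intact as an oscillatory integral rather than bounded by absolute values. After the rescaling $y_i\to y_i/\sqrt{x_3^2+1}$ used in the paper, this inner integral becomes $\int_{-\infty}^{\infty}e\pra{x_3/(y_1y_2)}(x_3^2+1)^{-(s_1+s_2)/2}x_3^k\,\dd x_3$, which by the classical formula quoted before \eqref{stadeformula2} equals, up to Gamma factors, $(y_1y_2)^{\frac12-\frac{s_1+s_2}{2}}K_{\frac{s_1+s_2}{2}-\frac12-k}\pra{2\pi/(y_1y_2)}$ and therefore decays \emph{exponentially} as $y_1y_2\to 0$. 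It is this exponential decay — not a power-law bound on $\WJ$ — that controls the problematic corner; once it is in place, the remaining $(y_1,y_2)$-integral together with the $(m_1,m_2)$-sum converges absolutely for every $s_2\in\mathbb{C}$ as soon as $-\Re s_1\gg 1$, with your arithmetic estimate supplying the sum. So the correct order of operations is: integrate in $x_3$ first (keeping the oscillation), then take absolute values in $(m_1,m_2,y_1,y_2)$; your proposal inverts this and thereby proves a false intermediate claim.
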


\subsection{Gamma factors}

By \cite[page 161]{bump} we have the Gamma factors 
\begin{align}\nonumber
\int_0^\infty\int_0^\infty&\WJ\pra{
\begin{pmatrix}
y_1y_2 & &\\
&y_1&\\
&&1
\end{pmatrix}
}y_1^{s_1-1}y_2^{s_2-1}\frac{\dd y_1}{y_1} \frac{\dd y_2}{y_2}\\&=\frac{\mathsf C  \pi^{-s_1-s_2}}{4\Gamma\pra{\frac{s_1+s_2}{2}}} \gammafactorminus{s_2}\gammafactor{s_1}.\label{stadeformula1}
\end{align}

Let us recall a famous formula for $k=0, 1$
\nothing{
$$\int_{-\infty}^\infty
 e\pra{ {x_3}y}
(x_3^2+1)^{-\frac{s_1+s_2}{2}}x_3^k\dd x_3=i^k\sign y \frac{2\pi^{\frac{s_1+s_2}{2}}}{\Gamma\pra{\frac{s_1+s_2}{2}}}y^{\frac{s_1+s_2}{2}-\frac{1}{2}}K_{\frac{s_1+s_2}{2}-\frac{1}{2}-k}\pra{2\pi y}.$$
}
$$\int_{-\infty}^\infty e(uy)(u^2+1)^{-s}u^k\dd u
=(i\sign y)^k \frac{2\pi^s|y|^{s-\frac 1 2}}{\Gamma(s)}K_{s-\frac 1 2-k }(2\pi|y|).
$$
Then have the integral in \eqref{dirichlet2}
\begin{align}
\nonumber
& \int_0^\infty \int_0^\infty \int_{-\infty}^\infty
 e\pra{ -\frac{\sign m_2}{y_1y_2}\Re\frac{1}{-x_3+i}}
 \\ &\spaces
\WJ\pra{
\begin{pmatrix}
\frac{1}{y_1|-x_3+i|} & &\\ \nonumber
&|-x_3+i|y_2&\\
&&1
\end{pmatrix}
}
\pra{\frac{x_3y_2}{y_1}}^k
\dd x_3y_1^{s_1}y_2^{s_2}\frac{\dd y_1}{y_1}\frac{\dd y_2}{y_2}\\\nonumber
\end{align}
\begin{align}\nonumber
=\;&(\sign m_2)^k\int_0^\infty \int_0^\infty \int_{-\infty}^\infty
 e\pra{ \frac{x_3}{y_1y_2\pra{x_3^2+1}}}
\\\nonumber
&\spaces
\WJ\pra{
\begin{pmatrix}
\frac{1}{y_1\sqrt{x_3^2+1}} & &\\
&y_2\sqrt{x_3^2+1}&\\
&&1
\end{pmatrix}
}
\pra{\frac{x_3y_2}{y_1}}^k
\dd x_3y_1^{s_1}y_2^{s_2}\frac{\dd y_1}{y_1}\frac{\dd y_2}{y_2}\nonumber
\end{align}
\begin{align}\nonumber
=\;&(\sign m_2)^k\int_0^\infty \int_0^\infty \pra{\int_{-\infty}^\infty
 e\pra{ \frac{x_3}{y_1y_2}}
(x_3^2+1)^{-\frac{s_1+s_2}{2}}x_3^k\dd x_3}
\\\nonumber
&\spaces\spaces
\WJ\pra{
\begin{pmatrix}
\frac{1}{y_1} & &\\
&y_2&\\
&&1
\end{pmatrix}
}y_1^{s_1}y_2^{s_2}
\pra{\frac{y_2}{y_1}}^k
\frac{\dd y_1}{y_1}\frac{\dd y_2}{y_2}
%
%
%
\end{align}
\begin{align}\nonumber
=\;&(\sign m_2)^k\int_0^\infty \int_0^\infty \pra{i^k\frac{2\pi^{\frac{s_1+s_2}{2}}}{\Gamma\pra{\frac{s_1+s_2}{2}}}\pra{\frac{1}{y_1y_2}}^{\frac{s_1+s_2}{2}-\frac{1}{2}}K_{\frac{s_1+s_2}{2}-\frac{1}{2}-k}\pra{\frac{2\pi}{ y_1y_2}} }
\\\nonumber
&\spaces\spaces
\WJ\pra{
\begin{pmatrix}
\frac{1}{y_1} & &\\
&y_2&\\
&&1
\end{pmatrix}
}y_1^{s_1}y_2^{s_2}
\pra{\frac{y_2}{y_1}}^k
\frac{\dd y_1}{y_1}\frac{\dd y_2}{y_2}\\\label{stadeformula2}
=\;& (i\sign m_2)^k\mathsf C  \frac{
\pi^{\frac{s_1+s_2}{2}}}{4\Gamma\pra{\frac{s_1+s_2}{2}}}
\pi^{\frac{-3s_2-6k+3s_1-3}{2}}
\\
&\quad\;\quad \gammafactorminus{s_2} \gammafactorminus{1-s_1+2k}\nonumber
\end{align}
The last equality comes from \cite[page 357]{stade}, as well as  from \cite{bump}.

\subsection{The theorem}
\begin{theorem}\label{main}
Let $F$ be a Maass form on $\GL 3$ for $\Gamma_0(N)$ with a Dirichlet character $\psi: \mathbb{ Z}/N\mathbb Z\to \mathbb C^{\times}$ as defined in Section \ref{section_background}. It has the Fourier-Whittaker coefficient $A_F(m_1,m_2)$.
Let $c$ be a positive integer with $N|c$ and let $a$, $\bar a$ be two integers with $a\bar a\equiv 1\; \mod c$. Let $m\neq 0$ be an integer.
We have the identity
\begin{align}
\sum_{n=1}^\infty &\frac{A_F(n,m)}{n^{s}}e\pra{-\frac{n\bar a}{c}}
=c\pi^{-\frac{3}{2}+3s}\psi(a)\sum_{m_2\neq 0}\sum_{m_1|cm}\textsf G(s)\frac{A_F(m_1,m_2)}{m_1|m_2|} S\pra{am,m_2;\tfrac{cm}{m_1}}\pra{\frac{c^3|m|}{m_1^2|m_2|}}^{-s},
\label{equation_voronoi}
\end{align}
where we define for abbreviation
$$\textsf  G(s):=\frac{1}{2}\pra{\frac{\gammafactorminus{1-s}}{\gammafactor{s}}+i\sign (mm_2)\frac{\gammafactorminus{2-s}}{\gammafactor{s+1}}}.$$
The Dirichlet series on the left is convergent when $\Re s$ is large; the Dirichlet series on the right is convergent when $-\Re s$ is large; both have analytic continuation to the whole complex plane. 
Here $S(*,*;*)$ is the classical Kloosterman sum and the unordered triple $(\alpha, \beta, \gamma)$ is determined by 
the eigenvalues of $F$ under Casimir operators.
\end{theorem}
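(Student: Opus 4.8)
The plan is to compare the two evaluations of the Mellin transform $H_k(s_1,s_2)$ already prepared above and to glue them by analytic continuation in $s_1$. By the ``left side'' computation, $H_k$ is given by \eqref{dirichlet1} for $\Re s_1\gg1$; by the ``right side'' computation it is given by \eqref{dirichlet2} for $-\Re s_1\gg1$. The decay lemmas are exactly what is needed to see that these are one analytic object: the rapid decay of $\left.\pra{\tfrac{\partial}{\partial x_1}}^kH(x_1;y_1,y_2)\right|_{x_1=-d/c}$ as $y_1\to0$ and as $y_1\to\infty$ makes the $y_1$-Mellin transform entire in $s_1$, while the two absolute-convergence lemmas justify interchanging the $m_1,m_2$-sums with the $x_3$- and $y$-integrals in each half-plane. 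Hence \eqref{dirichlet1} and \eqref{dirichlet2} are two expansions of a single entire function of $s_1$, and I may equate them. Substituting the explicit Whittaker Mellin transforms \eqref{stadeformula1} and \eqref{stadeformula2} turns both sides into a product of a Dirichlet series with an explicit ratio of $\Gamma$-factors; I write $G_1(s_1,s_2)$ for the factor in \eqref{stadeformula1} and $I_k(s_1,s_2)$ for the one in \eqref{stadeformula2}.

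The decisive step is the choice of evaluation points so that the two values $k=0,1$ combine cleanly. Set $L^{\pm}(s):=\sum_{n\geq1}A_F(n,m)n^{-s}e(\mp nd/c)$. From \eqref{dirichlet1} the left side equals $c\psi(d)|m|^{-s_2}G_1(s_1,s_2)$ times $L^{+}(s_1)+(-1)^kL^{-}(s_1)$, carried by the extra weight $(2\pi i)^km_1^k$. I would therefore read the $k=0$ identity at $s_1=s$ and the $k=1$ identity at $s_1=s+1$: the weight $m_1$ reabsorbs the shift so that both produce $L^{\pm}(s)$, and on the right side both give the identical power $\pra{\tfrac{c^3|m|}{m_1^2|m_2|}}^{-s}$ because the exponent there is $k-s_1$. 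Solving the resulting $2\times 2$ linear system for $L^{+}(s)$ (that is, averaging the two normalized identities) cancels $L^{-}(s)$ and leaves $L^{+}(s)=\sum_nA_F(n,m)n^{-s}e(-n\bar a/c)$, where I used $d\equiv\bar a\pmod c$ coming from $ad-bc=1$.

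It remains to simplify the two $\Gamma$-factor ratios $I_0(s,s_2)/G_1(s,s_2)$ and $I_1(s+1,s_2)/G_1(s+1,s_2)$. The pleasant feature is that all dependence on the auxiliary variable $s_2$ disappears: both $I_k$ and $G_1$ contain the same $\gammafactorminus{s_2}$, the same $\Gamma\pra{\tfrac{s_1+s_2}{2}}$ in the denominator, and, after combining exponents, matching powers of $\pi$, so the two ratios collapse to $\pi^{3s-\frac32}\tfrac{\gammafactorminus{1-s}}{\gammafactor{s}}$ and $i\sign m_2\,\pi^{3s-\frac32}\tfrac{\gammafactorminus{2-s}}{\gammafactor{s+1}}$ respectively. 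Restoring the factor $\sign m$ carried by $(2\pi i\sign m)^k$ in \eqref{dirichlet2} turns the latter into $i\sign(mm_2)\,\pi^{3s-\frac32}\tfrac{\gammafactorminus{2-s}}{\gammafactor{s+1}}$, and the average of the two is exactly $\pi^{3s-\frac32}\textsf{G}(s)$. Combined with the bookkeeping factor $c$ and with $\psi(d)^{-1}=\psi(a)$ (valid since $ad\equiv1\pmod N$ and $N\mid c$), this is precisely \eqref{equation_voronoi}. The convergence claims and the analytic continuation to all of $\mathbb{C}$ are then immediate, since $L^{+}(s)$ has been expressed through the entire function $H_k$ divided by explicit $\Gamma$-factors.

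The main obstacle is the analytic input of the first paragraph rather than the algebra of the last two. One must be confident that \eqref{dirichlet1} and \eqref{dirichlet2} are genuinely continuations of one entire $H_k$; this rests on having rapid decay of the integrand at both ends of the $y_1$-range simultaneously, so that no boundary terms obstruct the Mellin continuation, together with absolute convergence in each half-plane, so that the term-by-term integration producing the two Dirichlet series is legitimate. Everything after that is forced: the shifts $s_1=s$ and $s_1=s+1$ for $k=0,1$ are dictated by the requirement that both $k$ yield the same power of $c^3|m|/(m_1^2|m_2|)$, and the emergence of $\textsf{G}(s)$ is an automatic consequence of the Stade-type $\Gamma$-factor formulas.
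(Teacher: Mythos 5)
Your proposal is correct and follows essentially the same route as the paper's proof: equating the two evaluations \eqref{dirichlet1} and \eqref{dirichlet2} of $H_k$ after analytic continuation in $s_1$, reading the $k=0,1$ identities at the shifted points $s_1=s+k$ (the paper's $H_k(s_1+k,s_2)$), cancelling all $s_2$-dependence via \eqref{stadeformula1}--\eqref{stadeformula2}, and averaging the two parities to isolate $e(-n\bar a/c)$ and produce $\mathsf{G}(s)$. Your justification of the continuation (entirety of the Mellin transform from rapid decay at both ends of the $y_1$-range) is just a direct restatement of the paper's appeal to Riemann's splitting method, resting on the same two decay lemmas.
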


\begin{proof}[Proof of Theorem \ref{main}]
The holomorphic continuation of $H_k(s_1,s_2)$ in $s_1$ comes from  Riemann's method of
dividing the integration of $y_1$ on $(0,\infty)$ into  $(0,1]$ and $[1,\infty)$,
and then making the transformation $y_1 \to 1/y_1$ on  $(0,1]$.
Re-examining the two ways to express $H_k(s_1+k,s_2)$ (\eqref{dirichlet1} and \eqref{dirichlet2}) along with the explicit Gamma factors \eqref{stadeformula1} and \eqref{stadeformula2}, we can cancel all parts involving $s_2$ and we obtain
\begin{align*}
\psi(d)\sum_{m_1=1}^\infty & 
\frac{A_F(m_1,m)}{m_1^{s}}\pra{e\pra{-\frac{m_1d}{c}}+(-1)^k e\pra{\frac{m_1d}{c}}}\\
&=c\pi^{-\frac{3}{2}+3s}\textsf G_k(s)\sum_{m_2\neq 0}\sum_{m_1|cm}(i\sign (mm_2))^k\frac{A_F(m_1,m_2)}{m_1|m_2|} S\pra{am,m_2;\frac{cm}{m_1}}\pra{\frac{c^3|m|}{m_1^2|m_2|}}^{-s},
\end{align*}
where we define for abbreviation
$$\textsf  G_k(s):=\frac{\gammafactorminus{1+k-s}}{\gammafactor{s+k}}.$$
The left side is absolutely convergent when $\Re s$ large and the Dirichlet series on the right side is absolutely convergent when $-\Re s$ large. They have analytic continuation to the whole complex plane and satisfy the previous functional equation.
Combining the two cases $k=0$ and $1$, we finish the proof of Theorem \ref{main}.
\end{proof}

\section{Application}\label{sectionapplication}
\nothing{
Let $\pi_p$ be a representation of $\GL 2(\mathbb{Q}_p)$.  Let $\chi_p$  be a representation of $\GL 1 (\mathbb{Q}_p)$.
In the theory of automorphic representation, it is a natural question to consider the $L$-factor
$L(s, \pi_p\times\chi_p),$
whose inverse is a polynomial in $\mathbb C[p^{-s}]$.
It is sometimes known that $$L(s, \pi_p\times\chi_p)\equiv 1$$ when $\chi_p$ is of sufficiently large conductor. 
This is also related to the stability of the local gamma factors at non-archimedean places (see \cite{jacquetshalika}).
}

As an application of Theorem \ref{main}, we will produce a result on $\GL 3$ 
analogous to the stability of the local gamma factors at non-archimedean places, showing that 
the global $L$-function $L(s,F\times \chi)$ 
and its functional equation
do not contain $p$-factor for $p\nmid N$, 
whenever a Dirichlet character $\chi$ is ramified enough at places $p\mid N$.

Let $\chi $ be a primitive character of modulo $c$ and let $c$ be a multiple of $N$.
Define the partial $L$-functions
$$L\levelN(s,F'\times \bar \psi \bar \chi):=\sum_{\underset{\scriptstyle(n,N)=1}{n=1}}^\infty  \frac{A_F(n,1)\bar\psi\bar\chi(n)}{n^s}$$
and 
$$L\levelN(s,F\times \chi):=\sum_{\underset{\scriptstyle(n,N)=1}{n=1}}^\infty  \frac{A_F(1,n)\chi(n)}{n^s}.$$


\begin{theorem}
Let $F$ be a Maass form on $\GL 3$ for $\Gamma_0(N)$ with a Dirichlet character $\psi: \mathbb{ Z}/N\mathbb Z\to \mathbb C^{\times}$ as defined in Section \ref{section_background}. 
Let $\chi$ be a primitive Dirichlet character modulo $c$, which is a multiple of $N$. 
Let us assume that the character $\psi\chi$ is a primitive character modulo $c$, i.e., the conductor of $\psi\chi$ is $c$. 
Then we have the functional equation
$$L\levelN(s,F\times \chi)=\Xi(s)L\levelN(1-s,F'\times \bar \psi \bar \chi)$$
where $\Xi(s)$ stands for 
$$\Xi(s)=\tau(\psi\chi)\tau(\chi)^2c^{-3s}\cdot i^\kappa\pi^{3(s-\tfrac{1} 2)}
\frac{\gammafactor{1-s+\kappa}}{\gammafactorminus{s+\kappa}}
$$
and let $\kappa=0$ when $\psi\chi(-1)=1$ and $\kappa=1$ when $\psi\chi(-1)=-1$. 
\end{theorem}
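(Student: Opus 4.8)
The plan is to read off the claimed functional equation directly from the Voronoi formula \eqref{equation_voronoi} of Theorem \ref{main}, specialized to $m=1$, by converting the additive character $e\pra{-\tfrac{n\bar a}{c}}$ into a multiplicative one. Concretely, I would multiply both sides of \eqref{equation_voronoi} (with $m=1$) by $\bar\psi\bar\chi(a)$ and sum over $a$ modulo $c$ with $(a,c)=1$. For $s$ in the region of absolute convergence this $a$-sum is finite, so interchanging it with the $n$- and $m_2$-summations is harmless, and the resulting identity then extends to all of $\mathbb{C}$ by the analytic continuation already furnished by Theorem \ref{main}. The choice of twist $\bar\psi\bar\chi$ is forced by the two $L$-functions in the statement: on the left of \eqref{equation_voronoi} it will manufacture $L\levelN(s,F'\times\bar\psi\bar\chi)$, and on the right it will manufacture $L\levelN(1-s,F\times\chi)$.

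For the left-hand side, the substitution $a\mapsto\bar a$ together with the Gauss-sum evaluation for the primitive character $\psi\chi$ gives $\sum_a\bar\psi\bar\chi(a)e\pra{-\tfrac{n\bar a}{c}}=\psi\chi(-1)\,\bar\psi\bar\chi(n)\,\tau(\psi\chi)$; here the hypothesis that $\psi\chi$ is primitive modulo $c$ is exactly what forces the terms with $(n,c)>1$ to drop, so the left side collapses to $\psi\chi(-1)\tau(\psi\chi)\,L\levelN(s,F'\times\bar\psi\bar\chi)$. On the right-hand side the factor $\psi(a)$ already present in \eqref{equation_voronoi} combines with the twist to leave $\bar\chi(a)$, and one is led to evaluate $\sum_a\bar\chi(a)\,S\pra{a,m_2;\tfrac{c}{m_1}}$. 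Opening the Kloosterman sum and using that $\bar\chi$ is primitive modulo $c$ while the inner additive character has the strictly smaller modulus $c/m_1$ whenever $m_1>1$, this character-twisted Kloosterman sum vanishes unless $m_1=1$, in which case a second application of the Gauss-sum identity yields $\tau(\bar\chi)^2\chi(m_2)$. I expect this collapse of the divisor sum $\sum_{m_1\mid c}$ to the single term $m_1=1$, and its clean evaluation, to be the crux of the argument and the precise place where both primitivity hypotheses are used.

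It then remains to reassemble the $m_2$-sum. Splitting $\sum_{m_2\neq 0}$ into $m_2>0$ and $m_2<0$ and applying Proposition \ref{even_odd} (so $A_F(1,-m_2)=\psi(-1)A_F(1,m_2)$) together with $\chi(-m_2)=\chi(-1)\chi(m_2)$, the combined coefficient $A_F(1,m_2)\chi(m_2)$ acquires parity $\psi\chi(-1)$ under $m_2\mapsto -m_2$; correspondingly the two pieces of $\textsf{G}(s)$, whose relative sign is governed by $\sign(m_2)$, collapse into the single archimedean factor $i^\kappa\,\dfrac{\gammafactorminus{1-s+\kappa}}{\gammafactor{s+\kappa}}$ with $\kappa$ as in the statement, while the surviving positive-$m_2$ sum is exactly $L\levelN(1-s,F\times\chi)$. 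This produces the intermediate identity
$$\psi\chi(-1)\tau(\psi\chi)\,L\levelN(s,F'\times\bar\psi\bar\chi)=c^{\,1-3s}\pi^{-\frac32+3s}\,\tau(\bar\chi)^2\,i^\kappa\,\frac{\gammafactorminus{1-s+\kappa}}{\gammafactor{s+\kappa}}\,L\levelN(1-s,F\times\chi).$$

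Finally I would replace $s$ by $1-s$ and solve for $L\levelN(s,F\times\chi)$. The archimedean factor inverts, since $\tfrac{\gammafactorminus{1-(1-s)+\kappa}}{\gammafactor{(1-s)+\kappa}}$ has reciprocal $\tfrac{\gammafactor{1-s+\kappa}}{\gammafactorminus{s+\kappa}}$, matching the gamma part of $\Xi(s)$; the relation $\tau(\chi)\tau(\bar\chi)=\chi(-1)c$ turns $\tau(\bar\chi)^{-2}$ into $\tau(\chi)^2/c^2$, which both supplies the factor $\tau(\psi\chi)\tau(\chi)^2$ and combines the powers of $c$ into $c^{-3s}$; and the elementary identity $\psi\chi(-1)\,i^{-\kappa}=i^\kappa$ (valid for $\kappa\in\{0,1\}$ with $\kappa$ determined by $\psi\chi(-1)$) converts the leftover sign into the stated $i^\kappa$, while $\pi^{-(3/2-3s)}=\pi^{3(s-\frac12)}$. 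Collecting these factors yields precisely $L\levelN(s,F\times\chi)=\Xi(s)\,L\levelN(1-s,F'\times\bar\psi\bar\chi)$.
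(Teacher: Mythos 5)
Your proposal is correct and follows essentially the same route as the paper: twist the $m=1$ Voronoi formula of Theorem \ref{main} by $\bar\psi\bar\chi(a)$, sum over reduced residues modulo $c$, evaluate the resulting Gauss sums and character-twisted Kloosterman sums (whose vanishing for $m_1>1$ is exactly the content of the lemmas from \cite{kiralzhou} that the paper cites), collapse the $m_2$-sum via Proposition \ref{even_odd}, and replace $s$ by $1-s$. The only difference is cosmetic: you derive the Kloosterman-sum collapse by hand where the paper invokes \cite[Lemmas 2.3, 3.4]{kiralzhou}, and you carry out the final bookkeeping (via $\tau(\chi)\tau(\bar\chi)=\chi(-1)c$ and $(-1)^\kappa i^{-\kappa}=i^\kappa$) that the paper leaves implicit.
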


\begin{proof}
Multiply $\overline{\chi(a)}\overline{\psi(a)}$ on both sides of the Voronoi formula 
\eqref{equation_voronoi}
in Theorem \ref{main} and sum (with respect to $a$) over the reduced residue system modulo $c$. 
We have by \cite[Lemma 3.4]{kiralzhou}
$$\sum_{\underset {\scriptstyle (a,c)=1}{a\; \mod c}}\overline{\chi(a)}
 S(am,m_2;\tfrac{cm}{m_1})
=\begin{cases}g(\bar \chi,c,m_1)g(\bar \chi,\tfrac{cm}{m_1},m_2), \quad &\text{ if } m_1|m,\\
0, \quad&\text{ otherwise.}
\end{cases}
$$
Let $m$ be $1$, which forces $m_1$ to be $1$ and now we have 
$$\sum_{\underset {\scriptstyle (a,c)=1}{a\; \mod c}}
\overline{\chi(a)} S(a,m_2;\tfrac{c}{m_1})=\delta_{m_1=1} \tau(\bar \chi)  g(\bar \chi,c,m_2).$$
We have by \cite[Lemma 2.3]{kiralzhou}
$$g(\bar\chi,c,m_2)=\begin{cases}
\tau(\bar \chi)\chi(m_2),\quad &\text{if }(m_2,c)=1,\\
0,\quad &\text{otherwise.}
\end{cases}
$$
Recall Proposition \ref{even_odd} we have the right side of \eqref{equation_voronoi} 

\begin{align*}
&
c\pi^{-\frac{3}{2}+3s}\sum_{\underset {\scriptstyle (a,c)=1}{a\; \mod c}}\overline{\chi(a)}\sum_{m_2\neq 0}\sum_{m_1|c}\textsf G(s)\frac{A_F(m_1,m_2)}{m_1|m_2|} S\pra{a,m_2;\frac{c}{m_1}}\pra{\frac{c^3}{m_1^2|m_2|}}^{-s}
\\
=\;
&
c^{1-3s}\pi^{-\frac{3}{2}+3s} \tau(\chib)^2
\sum_{m_2\neq 0}\textsf G(s)\frac{A_F(1,m_2)}{|m_2|^{1-s}} \chi(m_2)
\\
=\;&
c^{1-3s}\pi^{-\frac{3}{2}+3s} \tau(\chib)^2
i^\kappa\mathsf G_\kappa(s)
\sum_{\underset{\scriptstyle (m_2,c)=1}{m_2=1}}^{\infty}\frac{A_F(1,m_2)\chi(m_2)}{m_2^{1-s}}. 
\end{align*}
From the left side of \eqref{equation_voronoi}, we have
\begin{align*}
&\sum_{\underset {\scriptstyle (a,c)=1}{a\; \mod c}}\overline{\chi(a)}\overline{\psi(a)}\sum_{n=1}^\infty \frac{A_F(n,1)}{n^{s}}\exp\pra{-2\pi i\frac{n\bar a}{c}}
\\
=\;&\sum_{n=1}^\infty \frac{A_F(n,1)}{n^{s}}g(\psi\chi,c,-n)
\\
=\;&(-1)^\kappa
\tau(\psi\chi)\sum_{\underset{\scriptstyle (n,c)=1}{n=1}}^\infty \frac{A_F(n,1)}{n^{s}}\bar\psi\bar\chi(n).
\end{align*}
Hence the theorem is proved after changing $s$ to $1-s$.
\end{proof}

Let us assume that $F$ is an eigenfunction under $T_n$ and $T_n^*$ for all $(n, N)=1$.
Let us assume $A_F(1,1)=1$
In such a case, by Section \ref{subsection_hecke_operator}, we have 
$$A_F(n,1)\bar\psi\bar\chi(n)=A_{\tilde F}(1,n)\bar\chi(n)$$
and 
$$L\levelN(s,F'\times \bar \psi \bar \chi)=
L\levelN(s,\tilde F\times \bar \chi):=
\sum_{\underset{\scriptstyle(n,N)=1}{n=1}}^\infty  \frac{A_{\tilde F}(1,n)\bar\chi(n)}{n^s}.$$
Moreover we have
$$L\levelN(s,\tilde F\times\bar \chi)=\overline{L\levelN(\bar s, F\times \chi)}.$$
Actually 
$L\levelN(s,F\times \chi)$
has the Euler product by the Hecke relations
$$L\levelN(s,F\times\chi)=\prod_{p\nmid N} \pra{1-\frac{A_F(1,p)\chi(p)}{p^s}+\frac{A_F(p,1)\chi(p)^2}{p^{2s}}-\frac{\chi^3\psi(p)}{p^{3s}}}^{-1}.$$

\begin{theorem}\label{theorem_L_function}
Let $F$ be a Maass form on $\GL 3$ for $\Gamma_0(N)$ with a Dirichlet character $\psi: \mathbb{ Z}/N\mathbb Z\to \mathbb C^{\times}$ as defined in Section \ref{section_background}. 
Let us assume that $F$ is an eigenfunction under $T_n$ and $T^*_n$ for all $n$ with $(n,N)=1$, as described in Section \ref{subsection_hecke_operator} as well as $A_F(1,1)=1$. 
Let $\chi$ be a primitive Dirichlet character modulo $c$, which is a multiple of $N$. 
Let us assume that the character $\psi\chi$ is a primitive character modulo $c$, i.e., the conductor of $\psi\chi$ is $c$. 
Then we have the functional equation
$$L\levelN(s,F\times \chi)=\Xi(s)L\levelN(1-s,\tilde F\times  \bar \chi)$$
where $\Xi(s)$ stands for 
$$\Xi(s)=\tau(\psi\chi)\tau(\chi)^2c^{-3s}\cdot i^\kappa\pi^{3(s-\tfrac{1} 2)}
\frac{\gammafactor{1-s+\kappa}}{\gammafactorminus{s+\kappa}}
$$
and let $\kappa=0$ when $\psi\chi(-1)=1$ and $\kappa=1$ when $\psi\chi(-1)=-1$. 
\end{theorem}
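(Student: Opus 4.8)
The plan is to deduce Theorem~\ref{theorem_L_function} directly from the functional equation proved in the preceding theorem, namely
$$L\levelN(s,F\times \chi)=\Xi(s)\,L\levelN(1-s,F'\times \bar\psi\bar\chi),$$
which holds under the same hypotheses on $\chi$ and $\psi\chi$ and with the identical factor $\Xi(s)$. Since the two statements share the same left-hand side and the same $\Xi(s)$, the whole content of the reduction is the identification of $L\levelN(1-s,F'\times\bar\psi\bar\chi)$ with $L\levelN(1-s,\tilde F\times\bar\chi)$; this is precisely where the extra Hecke-eigenfunction hypothesis of Theorem~\ref{theorem_L_function}, absent from the preceding theorem, comes into play.

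First I would invoke the Hecke theory of Section~\ref{subsection_hecke_operator}. Under the assumption that $F$ is an eigenfunction of $T_n$ and $T_n^*$ for every $n$ with $(n,N)=1$ and that $A_F(1,1)=1$, the Fourier--Whittaker coefficients obey the contragredient relation $A_F(m,n)=\psi(m)\psi(n)A_{\tilde F}(n,m)$ for $(mn,N)=1$. Specializing the second index to $1$ gives $A_F(n,1)=\psi(n)A_{\tilde F}(1,n)$ for all $n$ with $(n,N)=1$. Because $\psi$ is a Dirichlet character, $\psi(n)\bar\psi(n)=1$ on this range, so multiplying through by $\bar\psi\bar\chi(n)$ yields the term-by-term identity $A_F(n,1)\,\bar\psi\bar\chi(n)=A_{\tilde F}(1,n)\,\bar\chi(n)$.

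Second, summing this identity against $n^{-(1-s)}$ over the integers coprime to $N$ matches the two Dirichlet-series definitions and gives $L\levelN(1-s,F'\times\bar\psi\bar\chi)=L\levelN(1-s,\tilde F\times\bar\chi)$. Substituting into the preceding functional equation produces $L\levelN(s,F\times\chi)=\Xi(s)\,L\levelN(1-s,\tilde F\times\bar\chi)$ with the stated $\Xi(s)$ and parity parameter $\kappa$, which is exactly the claim.

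The step I expect to carry the real weight is not analytic but structural: the contragredient $\tilde F$ is, as the paper emphasizes, only available through its coefficients and is not yet defined intrinsically when $N>1$. Consequently every manipulation must be carried out at the level of the coefficient identity $A_F(n,1)=\psi(n)A_{\tilde F}(1,n)$ rather than through any geometric realization of $\tilde F$, and one must check that this identity---obtained from comparing the eigenvalues of $T_n$ and $T_n^*$---is genuinely valid for all $n$ in the coprime-to-$N$ range defining the partial $L$-functions. Granting that, the functional equation follows at once from the earlier result.
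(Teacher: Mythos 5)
Your proposal is correct and follows essentially the same route as the paper: the paper likewise deduces Theorem \ref{theorem_L_function} from the preceding functional equation by invoking the Hecke-theoretic coefficient identity $A_F(n,1)\bar\psi\bar\chi(n)=A_{\tilde F}(1,n)\bar\chi(n)$ from Section \ref{subsection_hecke_operator} (valid for $(n,N)=1$, which is automatic wherever $\bar\chi(n)\neq 0$ since $N\mid c$), giving $L\levelN(1-s,F'\times \bar\psi\bar\chi)=L\levelN(1-s,\tilde F\times \bar\chi)$. Your structural remark that $\tilde F$ is accessible only through its coefficients also matches the paper's own caveat about the ``hypothetical'' contragredient form.
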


\begin{remark}
The aforementioned functional equation would be the same as that derived from the theory of automorphic representation.
In Theorem \ref{theorem_L_function}, the $L$-function $L\levelN(s,\pi\times \chi)$ does not include any places $p\mid N$. 
\end{remark}

\nothing{
\subsection{Newform theory}

When $\psi$ is a primitive character of modulo $N$, we would like to prove that any Hecke-Maass form $F$ for $\Gamma_0(N)$ with $\psi$ is always a ``newform'' with $A_F(1,1)\neq 0$.

We start with two fundamental functional equations

\begin{align}
\sum_{n=1}^\infty &\frac{A_F(n,m)}{n^{s}}
g(1,lN,n)
=c\pi^{-\frac{3}{2}+3s}\sum_{m_2\neq 0}\sum_{m_1|lm}\textsf G(s)\frac{A_F(m_1,m_2)}{m_1|m_2|} 
g(\psi,lN,m_1)g(\psi,N\tfrac{lm}{m_1},m_2)
\pra{\frac{c^3|m|}{m_1^2|m_2|}}^{-s},
\label{newform1}
\end{align}

\begin{align}
\sum_{n=1}^\infty &\frac{A_F(n,m)}{n^{s}}
g(\psi,lN,n)
=c\pi^{-\frac{3}{2}+3s}\sum_{m_2\neq 0}\sum_{m_1|lNm}\textsf G(s)\frac{A_F(m_1,m_2)}{m_1|m_2|} 
g(1,lN,m_1)g(1,\tfrac{lNm}{m_1},m_2)
\pra{\frac{c^3|m|}{m_1^2|m_2|}}^{-s},
\label{newform2}
\end{align}

In \eqref{newform2}, let $m=1$ and $(l,N)=1$. 
$g(\psi,lN,n)=0$ for $(n,N)\neq 1$. 
The left side of \eqref{newform2} is identically zero. 
For $(m_1,N)=1$, 
$g(1,m_1N,m_1)g(1,N,m_2)\neq 0$.
Thus we have $A_F(m_1,m_2)=0$ for $(m_1,N)=1$.

In \eqref{newform1}, let $m=1$ and $l=1$.
This forces the right side of \eqref{newform1} to $m_1=1$. Hence the right side of \eqref{newform1} is identically zero.
Because of $g(1,N,n)\neq 0$, we have $A_F(n,1)$ for any $n$.

In \eqref{newform2}, let $m=1$.  The left side of \eqref{newform2} is identically zero.
Let $l=m_1$. 
$g(1,lN,m_1)g(1,\tfrac{lN}{m_1},m_2)\neq 0$.
Hence $A_F(m_1,m_2)=0$ for any $m_1, m_2$. 

XXXXXXX

XXXXXXX

XXXXXXX

Because of $A_F(n,1)=0$ for $(n,N)=1$
and 

Let us assume $m=1$ first and $\psi=\chib$. 

Let us assume $c=lN$ with $(l,N)=1$

The left side is zero.   

Let $l=m_1$.
The Gauss sum on the right is not zero.

Hence $A_F(m_1,m_2)=0$ for $(m_1,N)=1$. 
\\

Now let $\chi=\mathbf 1$ and $m=1$ and $l=1$.

This forces $(m_1,N)=1$.

Hence the right side is zero.

Hence $A_F(n,1)=0$ for any $n$.
\\

Now let $\chi=\psib$. 

Let $m=1$.

$c=Nl$. 

Let $l=m_1$.

The Gauss sum on the right is nonzero.

Hence $A_F(m_1,m_2)=0$.

}

\section{The second scenario}\label{sectionsecondscenario}
In this section, we will obtain a Voronoi formula on $\GL 3$ in the second scenario, where the level of the automorphic form and the conductor of the additive twist are coprime.  
Let $F$ be an automorphic form for $\Gamma_0(N)$ and 
a Dirichlet character $\psi :\mathbb{Z}/N\mathbb{ Z}\to \mathbb{C}^\times$. 
Let  $A_F(m_1,m_2)$  be the Fourier-Whittaker coefficient of $F$ for $m_1,m_2>0$. 
Let $\tilde{F}$ be the contragredient form of $F$ with the Fourier-Whittaker coefficient $A_{\tilde{F}}(m_1,m_2)$ for $m_1,m_2>0$. We have for $(mn,N)=1$ that $$A_F(m,n)=\psi(m)\psi(n)\ATF(n,m).$$
We assume for $(n_1n_2,N)=1$ the Hecke relations 
\begin{equation}\label{eq_double_hecke1}
\AF(1,m)\AF(n_2, n_1)=\sum_{abc=m, b|n_1, c|n_2}\AF(\tfrac{bn_2}c,\tfrac{ an_1}b)\psi (c),
\end{equation}
\begin{equation}\label{eq_double_hecke2}
\ATF(1,m) \ATF(n_2, n_1)=\sum_{abc=m, b|n_1, c|n_2} \ATF(\tfrac{bn_2}c,\tfrac{ an_1}b)\bar\psi (c)
\end{equation}
are satisfied.

Let $\chis:\mathbb{Z}/\cs\mathbb{Z}\to \mathbb{C}^\times$ be a primitive Dirichlet character modulo $\cs$. 
For $(\cs,N)=1$, let
$$L(s,F\times \chis)=\sum\limits_{n=1}^\infty \frac{\AF(1,n)\chis (n)}{n^s}$$ be the $L$-function of $F$ twisted by $\chis$. 
It is well known that they satisfy a functional equation
\begin{equation}\label{functionalequation}
L(s,F\times \chis)=G_\pm (s)\psi(\cs)\chis(N) \cs^{-3s}\tau(\chis)^3 L(1-s, \tilde F\times \chibs),
\end{equation}
where $\tau(\chis)$ is the Gauss sum of $\chis$, $G_\pm$ equals $G_+$ when $\chis(-1)=1$ and 
$G_\pm$ equals $G_-$  when $\chis(-1)=-1$.
More precisely, if $F$ is a Maass form as described in Section \ref{section_background} we have 
$$G_\pm(s)=i^k\tau(\psi)\epsilon(F){N}^{\tfrac{1}{2}-s}\pi^{3(s-\tfrac 1 2)}\frac{\gammafactorminus{1-s+k}}{\gammafactor{s+k}}$$
with $k=0$ for $G_+$ and $k=1$ for $G_-$ if $\psi(-1)=1$
and $k=1$ for $G_+$ and $k=0$ for $G_-$ if $\psi(-1)=-1$, as well as $|\epsilon(F)|=1$.


We will use the method developed in \cite{kiralzhou} to prove the following theorem.

\begin{theorem}\label{voronoi2}
Let us assume that $F$ (respectively $\tilde{ F}$) is associated with numbers $\AF(m_1,m_2)$ (respectively $\ATF(m_2,m_1)$) for positive integers $m_1$, $ m_2$, with $(m_1,N)=1$.
Let us assume that  $\AF(m_1,m_2)$ and  $\ATF(m_2,m_1)$ satisfy the Hecke relations \eqref{eq_double_hecke1},
\eqref{eq_double_hecke2}.
Let $c$ be a positive integer with $(c,N)=1$.
Let $q$ be a positive integer with $(q,N)=1$. 
Let us assume \eqref{functionalequation} for any primitive Dirichlet character $\chis$ modulo $\cs$ with $(\cs,N)=1$.
Let $a$ be an integer with $(a,c)=1$. Let $\bar a$ (respectively $\bar N$) be the multiplicative inverse of $a$ (respectively $N$) modulo $c$. 
We have the following formula 
\begin{align}\nonumber
\sum_{n=1}^\infty &\frac{\AF(q,n)}{n^{s}}e\pra{\frac{\bar a n}{c}}\\&=c\psi(cq) \sum_\pm\sum_{m_2=1}^\infty\sum_{m_1|cq}
\frac{G_+(s)\mp G_-(s)}2
\frac{\ATF(m_1,m_2)}{m_1m_2} S\pra{\pm\bar N q a,m_2;\tfrac{cq}{m_1}}\pra{\frac{m_1^2m_2}{c^3q}}^{s}.\label{voronoi2formula}
\end{align}
The Dirichlet series on the left is convergent when $\Re s$ is large; the Dirichlet series on the right is convergent when $-\Re s$ is large; both have analytic continuation to the whole complex plane. 
\end{theorem}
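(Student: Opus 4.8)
The plan is to follow the double Dirichlet series formalism of \cite{kiralzhou}, which converts the family of twisted functional equations \eqref{functionalequation} into the additively twisted Voronoi identity. Write $L_a(s):=\sum_{n\geq 1}A_F(q,n)\,e\pra{\bar a n/c}\,n^{-s}$ for the left side of \eqref{voronoi2formula}. Since $(c,N)=(q,N)=1$ we have $(cq,N)=1$, so every primitive character $\chis$ of modulus dividing $cq$ is coprime to $N$ and \eqref{functionalequation} applies to it. The first step is to open the additive character by its Gauss-sum expansion: for $(n,c)=1$ one has $e\pra{\bar a n/c}=\tfrac1{\phi(c)}\sum_{\chi\bmod c}\bar\chi(a)\,\tau(\bar\chi)\,\chi(n)$, and a divisor decomposition according to $(n,c)$ reduces the general case to primitive characters $\chis$ of moduli $d\mid c$. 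This rewrites $L_a(s)$ as a finite combination of the multiplicative twists $\sum_n A_F(q,n)\chis(n)n^{-s}$.

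Next I convert these twisted sums into the standard twisted $L$-functions $L(s,F\times\chis)=\sum_n A_F(1,n)\chis(n)n^{-s}$. Because the first argument is $q$ rather than $1$, this uses the Hecke relation \eqref{eq_double_hecke1} to distribute $q$ over divisors; combined with the divisor decomposition of the additive character from the previous step, this produces the divisor sum $\sum_{m_1\mid cq}$ and the Kloosterman modulus $cq/m_1$ in the final formula. Applying \eqref{functionalequation} to each $L(s,F\times\chis)$ replaces it by $L(1-s,\tilde F\times\chibs)$ and contributes the factor $G_\pm(s)\,\psi(\cs)\,\chis(N)\,(\cs)^{-3s}\,\tau(\chis)^3$; the factor $\chis(N)$ is what will become the inverse $\bar N$ inside the Kloosterman argument after reassembly.

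The reassembly step is the core of the proof. Expanding $L(1-s,\tilde F\times\chibs)=\sum_{m_2\geq1}A_{\tilde F}(1,m_2)\,\chibs(m_2)\,m_2^{s-1}$ and summing over the primitive characters $\chis$ of each modulus the accumulated character values $\chibs(a)$, $\chis(N)$, $\chibs(m_2)$ together with their Gauss-sum factors ($\tau(\chis)^3$ and the Gauss sum from opening the additive character), orthogonality collapses the character sum into a classical Kloosterman sum $S\pra{\pm\bar N q a,\,m_2;\,cq/m_1}$, while the Hecke relation \eqref{eq_double_hecke2} for $\tilde F$ restores the general first argument and yields $A_{\tilde F}(m_1,m_2)$. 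The two signs $\pm$ and the coefficient $\tfrac12\pra{G_+(s)\mp G_-(s)}$ arise from splitting the character sum by parity: even characters ($\chis(-1)=1$) carry $G_+$ and odd characters ($\chis(-1)=-1$) carry $G_-$, as recorded after \eqref{functionalequation}, and the even/odd combinations $\tfrac12(G_+\mp G_-)$ are exactly what separate the two frequencies, i.e.\ the two Kloosterman arguments $\pm\bar N q a$. Collecting the scalar $\psi(cq)$ and the powers of $c$ then reproduces \eqref{voronoi2formula}.

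The convergence and continuation assertions are inherited from the twisted $L$-functions: $L_a(s)$ converges for $\Re s$ large, the dual series converges for $-\Re s$ large because it is built from $L(1-s,\tilde F\times\chibs)$, and \eqref{functionalequation} supplies entire continuation on the whole plane. The step I expect to be the main obstacle is the Gauss-sum bookkeeping in the reassembly, especially when $(c,q)>1$: one must carry out the imprimitive-to-primitive reduction of the characters uniformly over the divisors $m_1\mid cq$, verify that the Gauss-sum factors together with $\chis(N)$ and $\chibs(a)$ recombine into exactly $S(\pm\bar N q a,m_2;cq/m_1)$ rather than a twisted variant, and keep the parity separation consistent so that $\tfrac12(G_+\mp G_-)$ is attached to the correct frequency. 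These are precisely the identities supplied by the Gauss-sum lemmas of \cite{kiralzhou}.
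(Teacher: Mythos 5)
Your overall strategy---reading \eqref{voronoi2formula} as equivalent, via character orthogonality and Gauss sums, to the family of functional equations \eqref{functionalequation}, with the Hecke relations mediating between first argument $q$ and first argument $1$---is the same strategy the paper uses, and your final step (multiplying \eqref{voronoi2formula} by $\chi(a)$, summing over reduced residues modulo $c$, and invoking \cite[Lemma 3.4]{kiralzhou} together with orthogonality) is literally the paper's closing paragraph. But there is a genuine gap in the middle, and it sits exactly where you flagged ``the main obstacle.'' Your step of converting $\sum_n \AF(q,n)\chis(n)n^{-s}$ into $L(s,F\times\chis)$ ``using the Hecke relation \eqref{eq_double_hecke1}'' cannot be carried out as stated: the Hecke relations assumed in this theorem express the \emph{product} $\AF(1,m)\AF(n_2,n_1)$ as a divisor sum; they do not expand a single coefficient $\AF(q,n)$ in terms of $\AF(1,\cdot)$. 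So you cannot apply \eqref{functionalequation} to the $q$-twisted series one character at a time, and the subsequent ``reassembly'' is not bookkeeping that the Gauss-sum lemmas of \cite{kiralzhou} supply on their own---in the paper those lemmas operate inside an organizing device that your proposal omits entirely.

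That device is the double Dirichlet series. The paper forms
$$Z(s,w)=\frac{L_q\levelN(2w-s,F)\,L(s,F\times\chis)}{L\levelN(2w-2s+1,\chibs)},$$
whose numerator is a product of two Dirichlet series precisely so that \eqref{eq_double_hecke1} applies, and whose denominator cancels the divisor function $\sigma\levelN_{2w-2s}(n,\chis)$ produced by Lemma \ref{ramanujan}, the generating function over $\ell$ of the imprimitive Gauss sums $g(\chibs,\ell\cs,n)$; note that the auxiliary variable $w$ simultaneously sums over \emph{all} moduli $\ell\cs$ with $(\ell,N)=1$, not just the single modulus $c$, and also absorbs the terms $n$ with $(n,c)>1$ that your step one must handle. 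Computing $Z(s,w)$ a second way---first \eqref{functionalequation}, then \eqref{eq_double_hecke2}, then \cite[Lemma 2.5]{kiralzhou} twice---and comparing coefficients in $w$ by uniqueness of Dirichlet series yields only the convolved identity $\mathbf{H}=\mathbf{G}$ of Theorem \ref{convolution}; a further M\"obius inversion over the divisor structure then recovers the single-modulus identity $H(q,\tfrac c\cs,\chis,s)=G(q,\tfrac c\cs,\chis,s)$, and only then does orthogonality give \eqref{voronoi2formula}. Without the $w$-decoupling, the coupling of all moduli $\ell\cs$ at once, and the final M\"obius inversion, the character-by-character reassembly you describe is exactly the computation that direct methods could handle only for special $c$ (e.g.\ prime), and which the double Dirichlet series of \cite{kiralzhou} was introduced to overcome for general $c$ with $(c,q)>1$; deferring it to ``the Gauss-sum lemmas of \cite{kiralzhou}'' leaves the core of the proof unproven.
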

Define the Gauss sum $$g(\chi^*,c,m):=\sum\limits_{\underset{\scriptstyle (u,c)=1}{u=1}}^c e\pra{\frac{um}{c}}\chis(u),$$
which is actually the Gauss sum of $\chi$. 
Define the divisor function 
$$\sigma_{s}^{(N)}(m,\chi):=\sum_{d|m,(d,N)=1}\chi(\frac m d)(\frac m d)^{s}.$$
\begin{lemma}\label{ramanujan}
	 Let $\Re(s)>1$. Define a Dirichlet series
 	 $$I^{(N)}(s, \chi^*,c^*,m)=\sum_{(\ell,N)=1}\frac{g(\chi^*, \ell c^*,m )}{\ell ^s}$$
 	 as a generating function for the nonprimitive Gauss sums induced from $\chi^*$.
 	 It satisfies the identity
 	 $$\tau(\chi^*)m^{1-s}\sigma_{s-1}^{(N)}(m,\overline{\chi^*})L^{(N)}(s,\chi^*)^{-1}=I^{(N)}(s,\chi^*,c^*,m).$$
\end{lemma}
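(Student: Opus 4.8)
The statement relates a Dirichlet series $I^{(N)}(s,\chi^*,c^*,m)=\sum_{(\ell,N)=1} g(\chi^*,\ell c^*,m)\ell^{-s}$ to the primitive Gauss sum $\tau(\chi^*)$, a twisted divisor sum $\sigma_{s-1}^{(N)}(m,\overline{\chi^*})$, and the inverse of a (level-$N$-stripped) Dirichlet $L$-function $L^{(N)}(s,\chi^*)$. I want to prove
$$\tau(\chi^*)\,m^{1-s}\,\sigma_{s-1}^{(N)}(m,\overline{\chi^*})\,L^{(N)}(s,\chi^*)^{-1}=I^{(N)}(s,\chi^*,c^*,m).$$

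\textbf{Plan of the proof.}

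\emph{Step 1: Evaluate the nonprimitive Gauss sum $g(\chi^*,\ell c^*,m)$ explicitly.} The quantity $g(\chi^*,\ell c^*,m)=\sum_{(u,\ell c^*)=1}e(um/(\ell c^*))\chi^*(u)$ is the Gauss sum attached to the \emph{imprimitive} character induced by $\chi^*$ to modulus $\ell c^*$, twisted by the frequency $m$. The classical reduction formula (the same one invoked via \cite[Lemma 2.3]{kiralzhou} in the previous section for the $c$-modulus case) expresses such a sum in terms of the primitive Gauss sum $\tau(\chi^*)$ and an arithmetic factor depending on $\gcd(m,\ell c^*)$; since $\chi^*$ is primitive modulo $c^*$, the induced character to modulus $\ell c^*$ has conductor $c^*$, and one obtains a clean formula. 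After this reduction the only genuinely $\ell$-dependent piece is a Ramanujan-type/divisor factor, which is why I expect the right-hand side to assemble into a divisor function $\sigma_{s-1}^{(N)}$ and an $L$-function inverse. The factor $m^{1-s}$ and the exponent $s-1$ in $\sigma^{(N)}_{s-1}$ signal that a Möbius-type inversion over the divisors $d\mid m$ with $(d,N)=1$ will appear, with $d$ recording $\gcd$-information between $m$ and $\ell$.

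\emph{Step 2: Interchange and factor the $\ell$-summation.} Having expressed $g(\chi^*,\ell c^*,m)$ in the form (primitive Gauss sum) $\times$ (multiplicative arithmetic function of $\ell$ keyed to $\gcd(m,\ell)$), I would substitute into $I^{(N)}(s,\chi^*,c^*,m)$ and rearrange the double sum over $\ell$ (with $(\ell,N)=1$) and over the common divisor $d\mid m$. The coprimality constraint $(\ell,N)=1$ is precisely what turns the full Dirichlet $L$-function into the level-$N$-stripped $L^{(N)}(s,\chi^*)$ and the ordinary divisor function into $\sigma^{(N)}_{s-1}$. The absolute convergence for $\Re(s)>1$ is guaranteed by the hypothesis and justifies all rearrangements. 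The mechanism producing $L^{(N)}(s,\chi^*)^{-1}$ (rather than $L^{(N)}(s,\chi^*)$) is the appearance of a Möbius factor $\mu$ from the primitivity of the induced character: one recognizes $\sum_{(\ell,N)=1}\mu(\cdot)\chi^*(\cdot)\ell^{-s}=L^{(N)}(s,\chi^*)^{-1}$ via the Euler product $\prod_{p\nmid N}(1-\chi^*(p)p^{-s})$.

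\emph{Step 3: Match the arithmetic factors and conclude.} After factoring, I would identify the surviving $d$-sum as $m^{1-s}\sigma^{(N)}_{s-1}(m,\overline{\chi^*})$ by tracking the exponents of $m$ and $d$ and the character evaluations (note the conjugate $\overline{\chi^*}$ enters because the character value on $m/d$ in the divisor sum comes paired oppositely to the $\chi^*$ sitting in the Gauss sum). Collecting the prefactor $\tau(\chi^*)$ and the $L$-function inverse then yields the asserted identity.

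\textbf{Main obstacle.} The delicate step is \emph{Step 1}: correctly computing the imprimitive Gauss sum $g(\chi^*,\ell c^*,m)$ uniformly in $\ell$ and $m$, in particular controlling the $\gcd(m,\ell c^*)$ dependence and verifying that the induced-character conductor stays $c^*$ so that the primitive $\tau(\chi^*)$ factors out cleanly. One must handle the cases where $\ell$ shares factors with $m$ (giving the divisor-sum structure) versus $(\ell,m)=1$, and ensure the $(\ell,N)=1$ restriction interacts correctly so that exactly $L^{(N)}(s,\chi^*)^{-1}$ and $\sigma^{(N)}_{s-1}(m,\overline{\chi^*})$ emerge. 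I expect this to follow from the standard theory of Gauss sums for imprimitive characters (as packaged in \cite{kiralzhou}), but assembling the pieces into the exact stated normalization — especially the exponent $1-s$ on $m$ and the conjugation on $\chi^*$ in the divisor function — is where the bookkeeping must be done carefully.
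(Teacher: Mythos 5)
Your proposal is correct and is essentially the paper's own argument: the paper proves this lemma by citing Kir\u{a}l--Zhou Lemma 2.4 ``mutatis mutandis,'' and the proof of that cited lemma is exactly your three steps --- the classical evaluation $g(\chi^*,\ell c^*,m)=\tau(\chi^*)\sum_{d\mid(m,\ell)}d\,\mu(\ell/d)\chi^*(\ell/d)\overline{\chi^*}(m/d)$ (valid because the character induced to modulus $\ell c^*$ has conductor $c^*$), followed by writing $\ell=de$ so that the $e$-sum gives $\sum_{(e,N)=1}\mu(e)\chi^*(e)e^{-s}=L^{(N)}(s,\chi^*)^{-1}$ and the $d$-sum gives $\sum_{d\mid m,\,(d,N)=1}d^{1-s}\overline{\chi^*}(m/d)=m^{1-s}\sigma^{(N)}_{s-1}(m,\overline{\chi^*})$. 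The only modification needed for the level-$N$ version is that the constraint $(\ell,N)=1$ passes to both factors $d$ and $e$ of $\ell$, which is precisely what produces the superscript-$(N)$ objects, so your flagged ``main obstacle'' is indeed just the standard imprimitive Gauss sum formula and the bookkeeping goes through as you describe.
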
 

\begin{proof}
Mutatis mutandis, this is \cite[Lemma 2.4]{kiralzhou}. 
\end{proof}

We define two Dirichlet series for $\Re(s)\gg 1$
$$H(q,\tfrac{c}\cs,\chis,s):=\sum_n \frac{\AF(q,n)g(\chibs,c,n)}{n^s(c/c^*)^{1-2s}}$$
and for $\Re(1-s)\gg 1$
$$G(q,\tfrac c \cs,\chis,s):=\frac{\chis(-N)\psi(qc) G_\pm(s)}{c^{3s-1}(c/\cs)^{1-2s}}
\sum_{d\cs|qc}\sum_n \frac{\ATF(d,n)}{dn} \pra{\tfrac{d^2n}{q}}^s
g(\chis,c,d)g(\chis,\tfrac{qc}{d},n).
$$

\nothing{
Using the method developed in \cite[Section 4]{goldfeldli1} and \cite{zhou} we have the following theorem.
\begin{theorem}
Let $c$ be a prime number and $(c,N)=1$. Let $a$ be an integer with $(a,c)=1$. Let $\bar a$ be the multiplicative inverse of $a$ $\mod c$. We have the following formula 
\begin{align*}
\sum_{n=1}^\infty &\frac{\AF(1,n)}{n^{s}}e\pra{-\frac{a n}{c}}\\&=c\psi(c) \sum_\pm\sum_{m_2=1}^\infty\sum_{m_1|c}
\frac{G_+(s)\mp G_-(s)}2
\frac{\ATF(m_1,m_2)}{m_1m_2} S\pra{\pm\bar N \bar a,m_2;\frac{c}{m_1}}\pra{\frac{c^3}{m_1^2m_2}}^{-s}.
\end{align*}
\end{theorem}
The method in \cite{kiralzhou} may produce a more general formula for 
$$\sum_n \AF(m,n)e\pra{\frac{an}c}$$
when $(m,N)=1$ and $c$ is not necessarily a prime. 
}

\subsection{Double Dirichlet series}
In the following calculation, $\sum\limits_{x}$ stands for $\sum\limits_{x=1}^\infty$ 
and $\sum\limits_{(x,N)=1}$ for $\sum\limits_{\underset{\scriptstyle (x,N)=1}{x=1}}^\infty$.
\begin{theorem}\label{convolution}
For positive integers $q,m$ which are coprime with $N$, we define for $\Re (s)\gg 1$
$$\mathbf{H}(q,m,\chis, s):=\sum_{d_2|q}\sum_{d_1\ell=m} 
\frac { \psi(d_2)  \chis(d_1d_2)} { d_2^s} 
H(\tfrac{qd_1}{d_2},\ell c^*,\chis,s)$$
and for $\Re(1-s)\gg 1$
$$
\mathbf{G}(q,m,\chis, s):=
\sum_{d_2|q}\sum_{d_1\ell=m} 
\frac { \psi(d_2)  \chis(d_1d_2)} { d_2^s} 
G(\tfrac{qd_1}{d_2},\ell c^*,\chis,s).$$
Both have analytic continuation to all $s\in \mathbb C$ and 
$$\mathbf{H}(q,m,\chis, s)=\mathbf{G}(q,m,\chis, s).$$
\end{theorem}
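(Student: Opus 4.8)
The plan is to evaluate each of $\mathbf{H}$ and $\mathbf{G}$ in closed form and then recognize that the two closed forms are interchanged by the functional equation \eqref{functionalequation}. Because $\mathbf{H}$ converges only for $\Re s\gg 1$ while $\mathbf{G}$ converges only for $\Re(1-s)\gg 1$, the two series are never simultaneously convergent; the asserted identity must therefore be read as an equality of meromorphic continuations, and the functional equation is precisely the bridge between the two half-planes.

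First I would unfold $\mathbf{H}$: substitute the definition of $H$, interchange the (absolutely convergent, for $\Re s\gg 1$) summations, and use the twisted multiplicativity of the induced Gauss sum $g(\chibs,\ell\cs,n)$ to split off the conductor-$\cs$ part from the part of modulus $\ell=m/d_1$. The inner sum over $n$ then reduces to a $\chis$-twist of $\sum_n \AF(\tfrac{qd_1}{d_2},n)n^{-s}$, and the convolution $\sum_{d_2\mid q}\sum_{d_1\ell=m}$ weighted by $\psi(d_2)\chis(d_1d_2)d_2^{-s}$ is exactly what is needed to invoke the Hecke relation \eqref{eq_double_hecke1}, collapsing the coefficients $\AF(\tfrac{qd_1}{d_2},n)$ to $\AF(1,\cdot)$ times elementary divisor factors. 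The outcome I expect is $L(s,F\times\chis)$ multiplied by an explicit finite product of local factors at the primes dividing $qm$ together with explicit Gauss-sum constants; Lemma \ref{ramanujan}, or its coefficient-level analog, supplies the identity that resums the induced nonprimitive Gauss sums in terms of $\tau(\chis)$, $\sigma_{s-1}^{(N)}$ and $L\levelN(s,\chis)$.

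Next I would invoke \eqref{functionalequation} to replace $L(s,F\times\chis)$ by $G_\pm(s)\psi(\cs)\chis(N)\cs^{-3s}\tau(\chis)^3 L(1-s,\tilde F\times\chibs)$, and then run the mirror computation for $\mathbf{G}$: unfolding its definition, applying the Hecke relation \eqref{eq_double_hecke2} for $\tilde F$ and the same Gauss-sum resummation, I expect a closed form manifestly equal to the image of the closed form for $\mathbf{H}$ under this functional equation. Matching the explicit data on the two sides — the $\tau(\chis)^3$, $\chis(-N)$ and $\psi(qc)$ Gauss-sum factors, the $(c/\cs)^{1-2s}$ and $\cs^{-3s}$ powers, and the gamma factors $G_\pm(s)$ — then yields $\mathbf{H}=\mathbf{G}$, with the analytic continuation of each side inherited from that of the twisted $L$-functions and of the explicit prefactors.

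The step I expect to be the main obstacle is the bookkeeping of divisibility conditions. The convolution defining $\mathbf{H}$ runs over $d_2\mid q$ and $d_1\mid m$, whereas the sum defining $\mathbf{G}$ carries the single condition $d\cs\mid qc$; reconciling these after the Hecke relations and the Gauss-sum factorizations, so that every index on one side pairs off with a matching index on the other, is where the calculation is genuinely delicate. A secondary difficulty is to justify all interchanges of summation and, above all, the passage between the two disjoint half-planes purely by meromorphic continuation — that is, to verify that the closed forms computed for $\mathbf{H}$ and for $\mathbf{G}$ are literally the same meromorphic function and not merely formally matched.
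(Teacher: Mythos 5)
You have the right strategy, and it would yield a proof, but your organization differs genuinely from the paper's. The paper never works at fixed $m$: it introduces the double Dirichlet series $Z(s,w)=L_q\levelN(2w-s,F)\,L(s,F\times\chis)\,/\,L\levelN(2w-2s+1,\chibs)$ and computes it twice --- directly, using the Hecke relation \eqref{eq_double_hecke1} and Lemma \ref{ramanujan}, which produces $\tau(\chibs)^{-1}\sum_{(m,N)=1}\mathbf{H}(q,m,\chis,s)\,m^{-2w}$; and again after inserting the functional equation \eqref{functionalequation}, using \eqref{eq_double_hecke2} and \cite[Lemma 2.5]{kiralzhou}, which produces $\tau(\chibs)^{-1}\sum_{(m,N)=1}\mathbf{G}(q,m,\chis,s)\,m^{-2w}$ --- and then concludes coefficient-by-coefficient via the uniqueness theorem for Dirichlet series in $w$. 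The auxiliary variable $w$ is exactly what lets the Hecke relations be used in their stated direction (a product of two coefficients expanded into a sum) and lets Lemma \ref{ramanujan}, which is a generating function over all $\ell$ coprime to $N$, be applied verbatim. Your fixed-$m$ route is workable --- the closed forms you predict do exist, namely $\mathbf{H}(q,m,\chis,s)=\tau(\chibs)\,L(s,F\times\chis)\sum_{nk=m}\AF(q,n)\mu(k)\chibs(k)\,n^{s}k^{2s-1}$, and $\mathbf{G}(q,m,\chis,s)$ equals the same finite factor times $\tau(\chibs)\,G_\pm(s)\psi(\cs)\chis(N)\cs^{-3s}\tau(\chis)^{3}L(1-s,\tilde{F}\times\chibs)$, so \eqref{functionalequation} matches them instantly --- but two of your tools must be replaced by finite counterparts: Lemma \ref{ramanujan} cannot be invoked at a fixed $\ell=m/d_1$ (you need the divisor-sum evaluation of Gauss sums of imprimitive characters, equivalently the finite identity $\sum_{d_1\ell=m}\chis(d_1)\,g(\chis,\ell\cs,d)=m\,\tau(\chis)\chibs(d/m)\,\delta_{m\mid d}$ underlying \cite[Lemma 2.5]{kiralzhou}; ``twisted multiplicativity'' is not the right tool here, since $\ell$ and $\cs$ need not be coprime), and the Hecke relation \eqref{eq_double_hecke1} goes from products to sums, so collapsing the single coefficients $\AF(\tfrac{qd_1}{d_2},n)$ requires recognizing your convolution, combined with those divisor sums, as the right-hand side of \eqref{eq_double_hecke1} after reindexing --- precisely the bookkeeping you flag as delicate. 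In exchange, your approach yields explicit closed forms for each $m$ and avoids the appeal to uniqueness of Dirichlet series; the paper's approach keeps every manipulation at the level of products of $L$-functions, where the Hecke relations and Gauss-sum resummations apply cleanly.
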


\begin{proof}
We define a double Dirichlet series
\begin{equation*}\label{doubledirichlet}
Z(s,w):=\frac{L_q\levelN(2w-s,F) L(s,F\times\chi^*)}{L\levelN(2w-2s+1,\chibs)},
\end{equation*}
where $L_q\levelN(s,F) :=\sum\limits_{(n,N)=1} \frac{\AF(q,n)}{n^s} $ is a Dirichlet series with $(q,N)=1$. 
Using the Hecke relation we have 
\begin{align*}
Z(s,w)&=L\levelN(2w-2s+1,\chibs)^{-1}\sum_{(n,N)=1}\sum_m
\frac {A\pra{q,n} \AF(1,m) \chis(m)} {n^{2w-s}m^s}\\
&=L\levelN(2w-2s+1,\chibs)^{-1}\sum_{(n,N)=1}\sum_m
\sum_{\underset{\scriptstyle d_1|n, d_2|q}{d_0d_1d_2=m}}
\frac {A\pra{\frac{qd_1}{d_2}, \frac{nd_0}{d_1}} \psi(d_2) \chis(m)} {n^{2w-s}m^s}.
\end{align*}
By a change of variable $n/d_1\to n$ we have
\begin{align*}
Z(s,w)&=
L\levelN(2w-2s+1,\chibs)^{-1}\sum_{(n,N)=1}\sum_m
\sum_{\underset{\scriptstyle (d_1,N)=1, d_2|q}{d_0d_1d_2=m}}
\frac {A\pra{\frac{qd_1}{d_2}, {nd_0}}  \psi(d_2)  \chis(m)} {(nd_1)^{2w-s}m^s}
\\
&=L\levelN(2w-2s+1,\chibs)^{-1}\sum_{(n,N)=1}\sum_{(d_1,N)=1}\sum_{d_0}\sum_{d_2|q} 
\frac {A\pra{\frac{qd_1}{d_2}, {nd_0}} \psi(d_2)  \chis(d_0d_1d_2)} {n^{2w-s} d_1^{2w}(d_0d_2)^s}
\\
&=L\levelN(2w-2s+1,\chibs)^{-1}\sum_{n}\sum_{(d_1,N)=1} \sum_{d_2|q}
\frac {A\pra{\frac{qd_1}{d_2}, {n}} \psi(d_2)  \chis(d_1d_2)} {n^{2w-s} d_1^{2w}d_2^s} \sigma\levelN_{2w-2s}(n,\chis)
\\
&=L\levelN(2w-2s+1,\chibs)^{-1}\sum_{n}\sum_{(d_1,N)=1} \sum_{d_2|q}
\frac {A\pra{\frac{qd_1}{d_2}, {n}} \psi(d_2)  \chis(d_1d_2)} {n^s d_1^{2w}d_2^s} \frac{\sigma\levelN_{2w-2s}(n,\chis)}{n^{2w-2s}}
\end{align*}
Applying Lemma \ref{ramanujan} we have
\begin{align}\nonumber
Z(s,w)&=\tau(\chibs)^{-1}\sum_{n}\sum_{d_2|q} \sum_{(d_1,N)=1} 
\frac {A\pra{\frac{qd_1}{d_2}, {n}} \psi(d_2)  \chis(d_1d_2)} {n^s d_1^{2w}d_2^s} \sum_{(\ell,N)=1}\frac{g(\chibs,\ell c^*, n)}{\ell^{2w-2s+1}}
\\
&=\sum_{(d_1,N)=1}\frac{1}{d_1^{2w}}\pra{\tau(\chibs)^{-1}\sum_{d_2|q}  \sum_{(\ell,N)=1}
\frac { \psi(d_2)  \chis(d_1d_2)} { \ell^{2w}d_2^s} 
H(\tfrac{qd_1}{d_2},\ell c^*,\chis,s)}
.\label{doubledirichletleft}
\end{align}

%
%
%
%



Apply the functional equation \eqref{functionalequation} to the double Dirichlet series $Z(s,w)$
we have
\begin{align*}
Z(s,w)&=G_\pm (s)\psi(\cs)\chis(N) \cs^{-3s}\tau(\chis)^3\frac{L_q\levelN(2w-s,F)  L(1-s, \tilde F\times \chibs)}{L\levelN(2w-2s+1,\chibs)}
\\
&=\frac{G_\pm (s)\psi(\cs)\chis(N) \cs^{-3s}\tau(\chis)^3}{L\levelN(2w-2s+1,\chibs)  }
\sum_{(n,N)=1}\sum_m \frac{\AF(q,n)\ATF (1,m)\chibs(m)}{n^{2w-s}m^{1-s}}
\\
&=\frac{G_\pm (s)\psi(\cs)\chis(N) \cs^{-3s}\tau(\chis)^3}{L\levelN(2w-2s+1,\chibs)  }
\sum_{(n,N)=1}\sum_m \frac{\ATF(n,q)\psi(nq)\ATF (1,m)\chibs(m)}{n^{2w-s}m^{1-s}}
.
\end{align*}
Applying the Hecke relation we have 
$$Z(s,w)=\frac{G_\pm (s)\psi(\cs)\chis(N) \cs^{-3s}\tau(\chis)^3}{L\levelN(2w-2s+1,\chibs)  }
\sum_{(n,N)=1}\sum_m 
\sum_{\underset{\scriptstyle d_1|q, d_2|n}{d_0d_1d_2=m} }
\frac{\ATF(\frac{nd_1}{d_2},\frac{qd_0}{d_1})
\psib(d_2)
\psi(nq)\chibs(m)}{n^{2w-s}m^{1-s}}
$$
By a change of variable $n/d_2\to n$
\begin{align}\nonumber
Z(s,w)&=\frac{G_\pm (s)\psi(\cs)\chis(N) \cs^{-3s}\tau(\chis)^3}{L\levelN(2w-2s+1,\chibs)  }
\sum_{(n,N)=1}
\sum_{d_1|q }\sum_{(d_2,N)=1}
\sum_{d_0}
\frac{\ATF(nd_1,\frac{qd_0}{d_1})
\psi(nq)\chibs(d_0d_1d_2)}{(nd_2)^{2w-s}(d_0d_1d_2)^{1-s}}
\\
&=
{G_\pm (s)\psi(\cs)\chis(N) \cs^{-3s}\tau(\chis)^3}
\sum_{(n,N)=1}
\sum_{d_1|q }
\sum_{d_0}
\frac{\ATF(nd_1,\frac{qd_0}{d_1})
\psi(nq)\chibs(d_0d_1)}{n^{2w-s}(d_0d_1)^{1-s}}
.\label{zswfirstexpression}
\end{align}


We would like to prove that \eqref{zswfirstexpression} equals 
\begin{equation}
\sum_{(d_1,N)=1}\frac{1}{d_1^{2w}}\pra{\tau(\chibs)^{-1}\sum_{d_2|q}  \sum_{(\ell,N)=1}
\frac { \psi(d_2)  \chis(d_1d_2)} { \ell^{2w}d_2^s} 
G(\tfrac{qd_1}{d_2},\ell c^*,\chis,s)}.\label{sumofG}
\end{equation}
We have \eqref{sumofG} equals
$$\tfrac{ \chis(-N)}{\tau(\chibs)}\sum_{d_2|q} \sum_{(d_1,N)=1} \sum_{(\ell,N)=1}
\sum_{d|\tfrac{qd_1}{d_2}\ell}\sum_n
\frac { 
  \chis(d_1d_2)
\psi(qd_1\ell \cs) G_\pm(s)  
  } { (\ell d_1)^{2w+s}\cs^{3s-1}q^sn^{1-s}d^{1-2s}} 
\ATF(d,n) 
g(\chis,\ell \cs,d)g(\chis,\tfrac{qd_1\ell \cs}{d_2d},n).
$$
By applying \cite[Lemma 2.5]{kiralzhou} we have \eqref{sumofG} equals 
$$\tfrac{ \chis(-N)}{\tau(\chibs)}\sum_{d_2|q} \sum_{(m,N)=1}
\sum_{d|\tfrac{qm}{d_2}}\sum_n
\frac { 
  \chis(d_2)
\psi(qm \cs) G_\pm(s)  
  } { m^{2w+s-1}\cs^{3s-1}q^sn^{1-s}d^{1-2s}} 
\ATF(d,n) 
g(\chis,\tfrac{qm \cs}{d_2d},n)\tau(\chis)\chibs(\tfrac d m) \delta_{m|d}.
$$
Taking $f=\tfrac d m$ we have \eqref{sumofG} equals
$$\tfrac{ \chis(-N)\tau(\chis)}{\tau(\chibs)}
 \sum_{(m,N)=1}
\sum_{f|q}
\sum_{d_2|\tfrac q f}
\sum_n
\frac { 
  \chis(d_2)
\psi(qm \cs) G_\pm(s)  
  } { m^{2w-s}\cs^{3s-1}q^sn^{1-s}f^{1-2s}} 
\ATF(fm,n) 
g(\chis,\tfrac{q \cs}{d_2f},n)\chibs(f) 
.
$$
Applying \cite[Lemma 2.5]{kiralzhou} again we have that \eqref{sumofG} equals
\begin{align*}
&\tfrac{ \chis(-N)\tau(\chis)^2}{\tau(\chibs)}
 \sum_{(m,N)=1}
\sum_{f|q}
\sum_n
\frac { 
\psi(qm \cs) G_\pm(s)  
  } { m^{2w-s}\cs^{3s-1}q^sn^{1-s}f^{1-2s}} 
\ATF(fm,n) 
\chibs(f) \chibs(\tfrac{nf}{q})\frac q f \delta_{\frac q f|n}
\\
=\;&\tfrac{ \chis(-N)\tau(\chis)^2}{\tau(\chibs)}
 \sum_{(m,N)=1}
\sum_{f|q}
\sum_n
\frac { 
\psi(qm \cs) G_\pm(s)  
  } { m^{2w-s}\cs^{3s-1} n^{1-s}f^{1-s}} 
\ATF(fm,n\tfrac q f) 
\chibs(f) \chibs(n),
\end{align*}
after a change of variable $nf/q \to n$,
which is identical to \eqref{zswfirstexpression}.

Comparing \eqref{doubledirichletleft} and \eqref{sumofG} and applying the uniqueness theorem for Dirichlet series (in terms of $w$), we finish the proof of the theorem. 
\end{proof}

\begin{proof}[Proof of Theorem \ref{voronoi2}]
We have 
$$H(q,m,\chis, s)=\sum_{e_0|m}\sum_{e_1|qe_0}
\frac{\mu(e_0)\mu(e_1)\chis(e_0e_1)\psi(e_1)}{e_1^s}
\mathbf{H}(\tfrac{qe_0}{e_1},\tfrac{m}{e_0},\chis, s)$$
and 
$$G(q,m,\chis, s)=\sum_{e_0|m}\sum_{e_1|qe_0}
\frac{\mu(e_0)\mu(e_1)\chis(e_0e_1)\psi(e_1)}{e_1^s}
\mathbf{G}(\tfrac{qe_0}{e_1},\tfrac{m}{e_0},\chis, s).$$
By Theorem \ref{convolution}, we have 
$$H(q,\ell,\chis,s)=G(q,\ell,\chis,s).$$
For a Dirichlet character $\chi:\mathbb{Z}/c\mathbb{Z}\to\mathbb{C}^\times$, which is induced from a primitive Dirichlet character $\chis:\mathbb{Z}/\cs\mathbb{Z}\to\mathbb{C}^\times$ with some integer $\cs |c$.
Multiply both side of \eqref{voronoi2formula} by $\chi(a) $ and sum over reduced residue classes modulo $c$
we get $$H(q,\tfrac c \cs,\chis,s)=G(q,\tfrac c \cs,\chis,s)$$  by \cite[Lemma 3.4]{kiralzhou}. Using the orthogonality relation for Dirichlet characters, we have that $H(q,\tfrac c \cs,\chis,s)=G(q,\tfrac c \cs,\chis,s)$ for all $\chi \;\mod c$ is equivalent to \eqref{voronoi2formula} for all $a\;\mod c$ with $(a,c)=1$. 
\end{proof}


\vspace*{\fill}
\noindent {\scshape{Fan Zhou}} \\
\small{
fan.zhou@maine.edu\\
\\
Department of Mathematics and Statistics\\
The University of Maine\\
5752 Neville Hall, Room 333\\
Orono, ME 04469, USA\\
}

\end{document}